\theoremstyle{definition}\newtheorem{theorem}{Theorem}
\newtheorem*{theorem*}{theorem}
\newtheorem{proposition}{Proposition}
\newtheorem{lemma}{Lemma}
\newtheorem{corollary}{Corollary}
\theoremstyle{definition}\newtheorem{definition}{Definition}
\theoremstyle{remark}\newtheorem{example}{Example}
\renewcommand{\to}[1][]{\xrightarrow{#1}}
\newcounter{AP}
\begin{document}
\title[Bounded reductive subalgebras]{Bounded reductive subalgebras of \Large$\frak{sl}_n$}
\author{Alexey Vladimirovich Petukhov}
\address{Chair of High Algebra of Mechanical and Mathematical Department,  Moscow State University named after ~M.~V.~Lomonosov, Moscow 119992 and 
Jacobs University Bremen, School of Science and Engineering, Bremen D-28759}
\email{a.petukhov@jacobs-university.de}
\maketitle
\begin{abstract}Let $\frak g$ be a reductive Lie algebra and $\frak k\subset\frak g$ be a reductive in $\frak g$ subalgebra. A $(\frak g, \frak k)$-module $M$ is a $\frak g$-module for which any element $m\in M$ is contained in a finite-dimensional $\frak k$-submodule of $M$. We say that a $(\frak g, \frak k)$-module $M$ is bounded if there exists a constant $C_M$ such that the Jordan-H\"older multiplicities of any simple finite-dimensional $\frak k$-module in every finite-dimensional $\frak k$-submodule of $M$ are bounded by $C_M$. In the present paper we describe explicitly all reductive in $\frak{sl}_n$ subalgebras $\frak k$ which admit a bounded simple infinite-dimensional $(\frak{sl}_n, \frak k)$-module. Our technique is based on symplectic geometry and the notion of spherical variety. We also characterize the irreducible components of the associated varieties of simple bounded $(\frak g, \frak k)$-modules.\end{abstract}
\section{Introduction}
 Throughout this paper $\frak g$ will be a reductive Lie algebra and $\frak k\subset\frak g$ will be a reductive in $\frak g$ subalgebra. Recall that a $(\frak g, \frak k)$-{\it module} is a $\frak g$-module which is {\it locally finite as a $\frak k$-module}, i.e. the space generated by $m, km, k^2m,...$ is finite-dimensional for all $m\in M$ and $k\in\frak k$. If a $(\frak g, \frak k)$-module is simple, it is a direct sum of simple finite-dimensional $\frak k$-modules. There are two well-known categories of $(\frak g, \frak k)$-modules: the category of Harish-Chandra modules and the category O. In the first case $\frak k$ is a symmetric subalgebra of $\frak g$ (i.e. $\frak k$ coincides with the fixed points of an involution of $\frak g$), and in the second case $\frak k$ is a Cartan subalgebra $\frak h_\frak g$ of $\frak g$. In both cases the $(\frak g, \frak k)$-modules in question have the important additional property that they have finite $\frak k$-multiplicities, i.e. have finite-dimensional $\frak k$-isotypic components.

I.~Penkov, V.~Serganova, and G.~Zuckerman have proposed to study, and attempt to classify, simple $(\frak g, \frak k)$-modules with finite $\frak k$-multiplicities for arbitrary reductive in $\frak g$ subalgebras $\frak k$,~\cite{PSZ},~\cite{PZ}. Such classifications are known for Harish-Chandra modules, see~\cite{KV} and references therein. In the classification of simple $(\frak g, \frak h_\frak g)$-modules  of finite type the bounded simple modules play a crucial role. Based on this, and on the experience with Harish-Chandra modules, I.~Penkov and V.~Serganova have proposed to study bounded $(\frak g, \frak k)$-modules for general reductive subalgebras $\frak k$, i.e. $(\frak g, \frak k)$-modules whose multiplicities are uniformly bounded. A question arising in this context is, given $\frak g$, to describe all reductive in $\frak g$ bounded subalgebras, i.e. reductive in $\frak g$ subalgebras $\frak k$ for which at least one infinite-dimensional bounded $(\frak g, \frak k)$-module exists. In~\cite{PS} I.~Penkov and V.~Serganova gave a partial answer to this problem, and in particular proved an important inequality which restricts severely the class of possible $\frak k$. They also gave the complete list of bounded reductive subalgebras of $\frak g=\frak{sl}_n$ which are maximal subalgebras.

In the present paper we describe explicitly all reductive in $\frak{sl}_n$ bounded subalgebras. Our technique is based on symplectic geometry and the notion of spherical variety. We also characterize the irreducible components of the support varieties of simple bounded $(\frak g, \frak k)$-modules.
\section{Basic definitions and statement of results}
 The ground field $\mathbb F$ is algebraically closed and of characteristic 0. We work in the category of algebraic varieties over $\mathbb F$. All Lie algebras considered are finite-dimensional. In what follows we use the term $\frak k$-{\it type} for a simple finite-dimensional $\frak k$-module. By T$X$ we denote the total space of the tangent bundle $\EuScript TX$ of a smooth variety $X$, and by T$_xX$ the tangent space to $X$ at a point $x\in X$.
\begin{definition}\upshape For a $\frak k$-module $M$ and a $\frak k$-type $W_\lambda$, we define the $\frak k$-{\it{}multiplicity of $W_\lambda$ in }$M$ as the supremum of the  Jordan-H\"older multiplicities of $W_\lambda$ in all finite-dimensional $\frak k$-submodules of $M$.\end{definition}
\begin{definition}\upshape A $(\frak g, \frak k)$-module $M$ is called {\it bounded} if $M$ is {\it a bounded $\frak k$-module}, i.e.  $\frak k$-multiplicities of all $\frak k$-types in $M$ are bounded by the same constant $C_M>0$. A bounded $(\frak g, \frak k)$-module $M$ is {\it{}multiplicity-free} if $C_M$ can be chosen as 1.\end{definition}
\begin{definition}\upshape We say that a variety $X$ is a $\frak k$-{\it variety} if a homomorphism of Lie algebras $\tau_X:\frak k\xrightarrow{} \EuScript TX$ is given. We say that $\frak k$ {\it has an open orbit on }$X$ if there exists a point $x\in X$ such that the homomorphism $\tau_X|_x:\frak k\to\mathrm T_xX$ is surjective.\end{definition}
\begin{definition}\upshape Let $X$ be a $\frak k$-variety. Then $X$ is $\frak k$-{\it{}spherical} if and only if a Borel subalgebra of $\frak k$ has an open orbit on any irreducible component of $X$.\end{definition}
We should mention that this definition of a spherical variety is slightly different from the one given in~\cite{VK}.

The following theorem is the main result of this paper.
\begin{theorem}\label{T1} Assume that $\frak g=\frak{sl}_n$. A pair $(\frak{sl}_n, \frak k)$ admits an infinite-dimensional simple bounded $(\frak{sl}_n, \frak k)$-module if and only if the projective space $\mathbb P(\mathbb F^n)$ is a spherical $\frak k$-variety.\end{theorem}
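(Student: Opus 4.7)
The plan is to handle the two directions separately, in both cases passing through the associated variety $V(M)\subset\frak{sl}_n^*\cong\frak{sl}_n$ of $M$, and using the bridge principle that bounded $\frak k$-multiplicities of $M$ translate into $\frak k$-sphericality of $\mathbb P(V(M))$ --- a ``projective'' refinement of the Vinberg--Kimelfeld equivalence between sphericality of an affine $G$-variety and multiplicity-freeness of its coordinate ring.

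For the ``if'' direction, suppose $\mathbb P(\mathbb F^n)$ is $\frak k$-spherical. I would construct a simple infinite-dimensional bounded module explicitly via the Weyl algebra. Embed $\frak{sl}_n\hookrightarrow\mathcal D(\mathbb A^n)$ by $E_{ij}\mapsto y_i\partial_{y_j}$; for a generic character $\lambda$ and a suitable affine open $U\subset\mathbb A^n$, the twisted localization $y^\lambda\mathbb F[U]$ is a simple infinite-dimensional $\mathcal D(U)$-module, and hence a simple $\frak{sl}_n$-module via the above embedding. Its $\frak k$-module structure coincides, up to a character twist, with the direct sum of sections of $\frak k$-equivariant line bundles on $\mathbb P(\mathbb F^n)$; the Vinberg--Kimelfeld theorem, applied to the spherical $\frak k$-variety $\mathbb P(\mathbb F^n)$, makes each such space of sections multiplicity-free as a $\frak k$-module, so the resulting $(\frak{sl}_n,\frak k)$-module is multiplicity-free, a fortiori bounded.

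For the ``only if'' direction, let $M$ be simple, infinite-dimensional, and bounded. Equip $M$ with a good $\frak k$-stable filtration; then $\mathrm{gr}\,M$ is a finitely generated graded module over $\mathbb F[\mathcal N(\frak{sl}_n)]$ with support $V(M)$, a non-zero closed conical $G$-invariant (in particular $K$-invariant) subset of the nilpotent cone. A key bridge lemma, of Vinberg--Kimelfeld type, converts the uniform bound on $\frak k$-multiplicities in $M$ into $\frak k$-sphericality of each irreducible component of $\mathbb P(V(M))$. Since $V(M)$ is non-zero and $G$-invariant, it contains $\overline{\mathcal O}_{\min}$, and a Gelfand--Kirillov dimension analysis (boundedness forces the smallest possible GK dimension) pins down $V(M)=\overline{\mathcal O}_{\min}$. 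Now $\mathbb P(\mathcal O_{\min})$ sits inside $\mathbb P(\mathbb F^n)\times\mathbb P((\mathbb F^n)^*)$ as the incidence variety $\{([v],[\alpha]):\alpha(v)=0\}$, and the projection $[v\otimes\alpha]\mapsto[v]$ is a smooth $\mathbb P^{n-2}$-bundle. The image of the open Borel orbit on $\mathbb P(\mathcal O_{\min})$ is therefore open and Borel-stable in $\mathbb P(\mathbb F^n)$, forcing it to be the (unique) open Borel orbit, so $\mathbb P(\mathbb F^n)$ is $\frak k$-spherical.

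The hardest part is the bridge lemma equating boundedness of $M$ with $\frak k$-sphericality of $\mathbb P(V(M))$: it requires a careful Hilbert-series argument comparing $\frak k$-multiplicities of $M$, those of $\mathrm{gr}\,M$, and the asymptotic multiplicities of sections of $\frak k$-equivariant line bundles on $\mathbb P(V(M))$, in the spirit of the Kostant--Rallis moment-map analysis. A secondary obstacle is pinning down $V(M)=\overline{\mathcal O}_{\min}$ for bounded simple modules, which combines the GK dimension estimate above with a direct check that no larger $G$-invariant closed cone in $\mathcal N(\frak{sl}_n)$ admits a $\frak k$-spherical projectivization --- essentially because higher nilpotent orbits are too large to accommodate a Borel open orbit after projectivization without already forcing $\mathbb P(\mathbb F^n)$ spherical.
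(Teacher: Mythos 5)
Your overall strategy (pass through the associated variety, use a Vinberg--Kimelfeld-type bridge between boundedness and sphericality) is the right one and matches the paper in spirit, but both directions of your argument contain steps that fail.

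In the ``if'' direction, the module $y^\lambda\mathbb F[U]$ is in general not a $(\frak g,\frak k)$-module at all: twisting by a non-integral power of a coordinate (or of any function cutting out a hypersurface that is not $\frak k$-stable) destroys local $\frak k$-finiteness. Concretely, for $\frak k=\frak{so}_n$ (which is spherical on $\mathbb P(\mathbb F^n)$), the element $y_2\partial_1-y_1\partial_2\in\frak{so}_n$ applied repeatedly to $y_1^\lambda$ produces $y_1^{\lambda-k}y_2^k$ for all $k\ge0$, so $y_1^\lambda$ lies in no finite-dimensional $\frak k$-submodule; your identification of the $\frak k$-module structure with $\bigoplus_d\Gamma(\mathbb P^{n-1},\mathcal O(d))$ only holds when the twist is trivial or the relevant divisor is $K$-stable. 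The paper avoids this by invoking the construction of Penkov--Serganova (Theorem 6.3 of \cite{PS}), which produces a multiplicity-free module from a proper closed $K$-orbit $Z\subset G/P$ with $K$-spherical conormal bundle, verifies that hypothesis via Panyushev's theorem (sphericality of $G/P$ is equivalent to sphericality of $\mathrm N^*_{Z/(G/P)}$), and treats the one transitive case $(\frak{sl}_{2n},\frak{sp}_{2n})$ separately.

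In the ``only if'' direction there are two errors. First, $\mathrm V(M)$ is only $\frak k$-stable (it lies in $\frak k^\perp$), not $G$-stable, so it need not contain $\overline{\mathcal O}_{\min}$. Second, boundedness does not force minimal Gelfand--Kirillov dimension: by Gabber's theorem and Proposition 2(b) each component of $\mathrm V(M)$ is a \emph{Lagrangian} (half-dimensional) subvariety of a nilpotent orbit closure $\overline{\mathcal O}$ which can be arbitrarily large --- e.g.\ for $(\frak{sl}_{2n},\frak{sp}_{2n})$ there are multiplicity-free simple modules whose associated variety is the closure of a conormal bundle in $\mathrm T^*\mathrm{Gr}(r,\mathbb F^{2n})$ for any $r$. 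So you cannot pin down $\mathrm V(M)=\overline{\mathcal O}_{\min}$ and then project the incidence variety. What the paper actually does is: (a) identify each component of $\mathrm V(M)$ with $\overline{\mathrm N^*_{Z/(G/P)}}$ inside $\mathrm T^*(G/P)$ via Richardson's theorem and Proposition 1, and deduce from Panyushev's theorem that some Grassmannian $\mathrm{Gr}(r,\mathbb F^n)$ is $\frak k$-spherical; then (b) descend from $\mathrm{Gr}(r,\mathbb F^n)$ to $\mathbb P(\mathbb F^n)$ by translating sphericality into Poisson-commutativity of $\mathbb F[\overline{\mathcal O}]^K$ (using Losev's theorem to pass from rational to regular invariants) and restricting along $\overline{\mathcal O}_{\min}\subset\overline{\mathcal O}$. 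Step (b) is the genuinely hard degeneration argument that your GK-dimension shortcut replaces with a false claim; without it the reduction to $\mathbb P(\mathbb F^n)$ is not established.
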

Any finitely generated $\frak g$-module $M$ has an associated graded S$(\frak g)$-module $\overline{\mathrm{gr}}M$. We denote by V$(M)$ the support of $\overline{\mathrm{gr}}M$ (for the precise definitions see the subsequent two sections). As a step towards the proof of Theorem~\ref{T1} we establish the following result which might be of interest on it's own.
\begin{proposition}\label{Pav}a) A finitely generated $(\frak g, \frak k)$-module $M$ is bounded if and only if its support variety $\mathrm V(M)$ is $\frak k$-spherical.\\ b) If the equivalent conditions of a) are satisfied any irreducible component $\tilde V$ of $\mathrm V(M)$ is a conical Lagrangian subvariety of \begin{center}$G\tilde V:=\{x\in\frak g^*\mid x=gv$ for some $g\in G$ and $v\in\tilde V$\}.\end{center}\end{proposition}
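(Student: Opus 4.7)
The plan is to pass to the associated graded module and reformulate both claims as properties of a $K$-equivariant coherent sheaf on an affine $K$-variety: boundedness will correspond to $K$-sphericity, and the Lagrangian property will follow by combining sphericity with Gabber's coisotropy theorem for associated varieties. Since $M$ is locally finite over $\frak k$, I would begin by choosing a $\frak k$-stable good filtration $F^\bullet M$. Then $\overline{\mathrm{gr}}M$ is a finitely generated graded $\mathrm S(\frak g)$-module with a compatible $K$-action, and every $\frak k$-multiplicity in $M$ agrees with the corresponding multiplicity in $\overline{\mathrm{gr}}M$ (since finite-dimensional $\frak k$-modules are semisimple, so each step of the filtration splits as $\frak k$-modules). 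Because $\frak k\subset \mathrm U(\frak g)$ preserves $F^\bullet M$, the degree-one symbols coming from $\frak k$ act by zero on $\overline{\mathrm{gr}}M$; hence $\mathrm V(M)\subset\frak k^\perp\subset\frak g^*$, and $\overline{\mathrm{gr}}M$ defines a $K$-equivariant coherent sheaf $\mathcal F$ on $\mathrm V(M)$ with $\mathrm{supp}\,\mathcal F=\mathrm V(M)$.

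The heart of (a) is the following criterion: for an affine $K$-variety $X$ and a $K$-equivariant coherent sheaf $\mathcal F$ with $\mathrm{supp}\,\mathcal F=X$, the $K$-module $\Gamma(X,\mathcal F)$ is bounded if and only if $X$ is $K$-spherical. In the forward direction, on each irreducible component $X_i$ with open Borel orbit $Y_i\cong B_\frak k/H_i$, restriction to $Y_i$ identifies $\Gamma(Y_i,\mathcal F|_{Y_i})$ with a Frobenius-induced $B_\frak k$-module, and this bounds $\dim\Gamma(X,\mathcal F)^{\frak n_\frak k}_\lambda$ by a constant multiple of the generic rank of $\mathcal F$, uniformly in $\lambda$; since $[W_\lambda:V]=\dim V^{\frak n_\frak k}_\lambda$, boundedness follows. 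Conversely, if some component has no open $B_\frak k$-orbit, then a Hilbert-polynomial count of $B_\frak k$-invariant sections forces the weight multiplicities of $\Gamma(X,\mathcal F)^{\frak n_\frak k}$ to grow unboundedly with $\lambda$. Applying this criterion with $X=\mathrm V(M)$ and $\mathcal F=\overline{\mathrm{gr}}M$ gives (a).

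For (b), pick a generic point $x$ of an irreducible component $\tilde V$ and set $\mathcal O:=Gx$, so $\overline{\mathcal O}=G\tilde V$ and $\mathcal O$ carries the Kirillov--Kostant--Souriau symplectic form. Because $\tilde V\subset\frak k^\perp$, we have $\tilde V\cap\mathcal O\subset\mu_\frak k^{-1}(0)$, where $\mu_\frak k:\mathcal O\to\frak k^*$ is the moment map for the $K$-action (the restriction to $\mathcal O$ of the dual of $\frak k\hookrightarrow\frak g$). A standard moment-map calculation then shows that every $K$-orbit inside $\mu_\frak k^{-1}(0)$ is isotropic in $\mathcal O$, so $\dim K\cdot x\leq\tfrac12\dim\mathcal O$. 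Sphericity of $\tilde V$, guaranteed by (a), yields $\dim\tilde V=\dim B_\frak k\cdot x\leq\dim K\cdot x\leq\tfrac12\dim G\tilde V$, while Gabber's theorem on coisotropy of associated varieties provides the reverse inequality $\dim\tilde V\geq\tfrac12\dim G\tilde V$. Equality of these bounds yields the Lagrangian claim, and conicity of $\tilde V$ is inherited from the grading on $\overline{\mathrm{gr}}M$.

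The main technical obstacle is the forward direction of the sphericity criterion in (a): while the structure-sheaf case is classical Vinberg--Kimelfeld, extracting a bound independent of $\lambda$ for an arbitrary coherent sheaf requires careful analysis of the $H_i$-module structure on the generic fiber of $\mathcal F$ and its interaction with the $\frak n_\frak k$-invariants of Frobenius induction. The reverse direction and the moment-map computations in (b) are more standard, but the appeal to Gabber's theorem is essential for the coisotropy inequality.
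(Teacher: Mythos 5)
Your overall architecture is the right one --- pass to $\overline{\mathrm{gr}}M$, reduce (a) to a Vinberg--Kimelfeld-type sphericity criterion for the support, and get (b) by combining the isotropy of $K$-orbits in $\frak k^\bot$ (moment-map computation), the open orbit coming from sphericity, and Gabber's coisotropy theorem. Part (b) as you present it is essentially identical to the paper's argument, with the dimension count made explicit. The problem is in part (a): the statement you isolate as ``the heart'' (boundedness of $\Gamma(X,\mathcal F)$ for a $K$-equivariant coherent sheaf with full support is equivalent to sphericity of $X$) is correct, but the proof you sketch for its forward direction does not work as stated, and you acknowledge rather than close the gap. Concretely, the restriction map $\Gamma(X,\mathcal F)\to\Gamma(Y_i,\mathcal F|_{Y_i})$ to the open Borel orbits is \emph{not} injective for a general coherent sheaf: its kernel consists of sections supported on the complement of $\bigcup_iY_i$ (torsion sections, embedded components), and these can carry unboundedly growing multiplicities a priori. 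So bounding the Frobenius-induced modules $\Gamma(Y_i,\mathcal F|_{Y_i})$ bounds only the image of the restriction, not $\Gamma(X,\mathcal F)$ itself; one would have to filter $\mathcal F$ by torsion submodules and induct on the dimension of the support, none of which you carry out.

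The paper avoids this entirely by a purely module-theoretic sandwich that you should adopt: choosing a finite-dimensional $\frak k$-stable generating subspace $\tilde M_0\subset\overline{\mathrm{gr}}M$ gives a $\frak k$-equivariant surjection $\tilde M_0\otimes_{\mathbb F}(\mathrm S(\frak g)/\mathrm J_M)\to\overline{\mathrm{gr}}M$, so sphericity of $\mathrm V(M)$ (hence boundedness of $\mathrm S(\frak g)/\mathrm J_M$ by Vinberg--Kimelfeld for the structure sheaf, the only case needed) immediately forces boundedness of $\overline{\mathrm{gr}}M$ after multiplying the constant by $\dim\tilde M_0$; and conversely, since the annihilator of $\overline{\mathrm{gr}}M$ in $\mathrm S(\frak g)/\mathrm J_M$ is zero, the adjoint map $\mathrm S(\frak g)/\mathrm J_M\to\tilde M_0^*\otimes_{\mathbb F}\overline{\mathrm{gr}}M$ is injective, so boundedness of $\overline{\mathrm{gr}}M$ forces boundedness of $\mathrm S(\frak g)/\mathrm J_M$ and hence sphericity of $\mathrm V(M)$. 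This two-line surjection/injection argument is exactly the substitute for the ``careful analysis of the $H_i$-module structure on the generic fiber'' that you flag as the main technical obstacle; until you either supply that analysis or replace it by the algebraic sandwich, your proof of (a) is incomplete in its forward direction.
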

All finite-dimensional $\frak k$-modules $W$ such that $\mathbb P(W)$ is a $\frak k$-spherical variety are known from the works of V.~Kac~\cite{KC}, C.~Benson and G.~Ratcliff~\cite{BR}, A. Leahy~\cite{Le}. The list of respective pairs $(\frak k, W)$ is reproduced in the Appendix. Theorem~\ref{T1} implies the following.
\begin{corollary}The list of pairs $(\frak{sl}(W), \frak k)$ for which $\frak k$ is reductive and bounded in $\frak{sl}(W)$ coincides with the list of Benson-Ratcliff and Leahy reproduced in the Appendix.\end{corollary}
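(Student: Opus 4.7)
The proof is a short deduction from Theorem~\ref{T1} combined with the classical classification of spherical projective representations due to Kac, Benson--Ratcliff, and Leahy, so my plan is simply to reduce the statement to checking that the two classifications ask the same question.

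First, I would unpack the hypothesis: by Definition~2 the phrase ``$\frak k$ is bounded in $\frak{sl}(W)$'' means that the pair $(\frak{sl}(W),\frak k)$ admits at least one infinite-dimensional simple bounded module. Theorem~\ref{T1}, applied with $\mathbb F^n=W$, converts this into the purely geometric statement that $\mathbb P(W)$ is a $\frak k$-spherical variety in the sense of Definition~4. Since $\mathbb P(W)$ is irreducible, Definition~4 here reduces to the single condition that a Borel subalgebra of $\frak k$ has an open orbit on $\mathbb P(W)$, which is precisely the hypothesis classified in~\cite{KC},~\cite{BR},~\cite{Le}. So the pairs $(\frak{sl}(W),\frak k)$ appearing in the corollary are exactly the pairs $(\frak k, W)$ reproduced in the Appendix.

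For the converse inclusion I would verify that every pair $(\frak k, W)$ on the Appendix list does in fact give rise to a reductive subalgebra $\frak k\subset\frak{sl}(W)$ in the sense used throughout the paper. This is routine: each such $\frak k$ is reductive and $W$ is a finite-dimensional $\frak k$-module, hence semisimple; consequently $\frak{sl}(W)\subset\mathrm{End}(W)$ is a semisimple $\frak k$-module under the adjoint action, and after quotienting by the (central) kernel of the map $\frak k\to\frak{gl}(W)$ and projecting onto the trace-zero part, $\frak k$ is realized as a reductive subalgebra of $\frak{sl}(W)$.

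The main obstacle: honestly there is none, since all of the real content has been packed into Theorem~\ref{T1} and into the Kac--Benson--Ratcliff--Leahy classification. The only definitional subtlety worth flagging is the remark after Definition~4 that the spherical-variety convention used here differs slightly from the one of~\cite{VK}; but on a single irreducible variety such as $\mathbb P(W)$ the two conventions coincide, so the matching with the Kac--Benson--Ratcliff--Leahy list is immediate.
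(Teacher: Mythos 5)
Your proposal is correct and is essentially identical to the paper's own (implicit, one-line) argument: the paper simply notes that the modules $W$ with $\mathbb P(W)$ a $\frak k$-spherical variety are classified by Kac, Benson--Ratcliff and Leahy, and then invokes Theorem~1 to translate boundedness of $\frak k$ in $\frak{sl}(W)$ into sphericity of $\mathbb P(W)$. Your additional remarks (irreducibility of $\mathbb P(W)$, realizing each listed $\frak k$ inside $\frak{sl}(W)$) are harmless elaborations of the same deduction.
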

We also prove the Conjecture 6.6 of~\cite{PS}: there exists an infinite-dimensional bounded $(\frak{sl}_n, \frak k)$-module if and only if there exists a multiplicity-free simple infinite-dimensional $(\frak{sl}_n, \frak k)$-module.

\section{Preliminaries on symplectic geometry}
In what follows we denote by $G$ the adjoint group of $[\frak g, \frak g]$, and by $K$ the connected subgroup of $G$ with Lie algebra $\frak k\cap[\frak g, \frak g]$. By T$^*X$ we denote the total space of the cotangent bundle of $X$ and by T$^*_xX$ --- the space dual to T$_xX$.
\begin{definition}\upshape Suppose that $X$ is a smooth variety which admits a closed nondegenerate 2-form $\omega$. Such a pair $(X,\omega)$ is called a {\it symplectic variety}. If $X$ is a $G$-variety and $\omega$ is $G$-invariant, $(X, \omega)$ is called a {\it symplectic $G$-variety}.\end{definition}
\begin{example}\upshape Let $X$ be a smooth $G$-variety. Then $\mathrm T^*X$ has a one-form $\alpha_X$ defined at a point $(l,x) (l\in\mathrm T_x^*X)$ by the equality $\alpha_X(\xi)=l(\pi_*\xi)$ for any $\xi\in\mathrm T_{(l, x)}(\mathrm T^*X)$, where $\pi: \mathrm T^*X\to X$ is the projection. The differential $\mathrm d\alpha_X$ is a nondegenerate $G$-invariant two-form on $\mathrm T^*X$ and therefore $(\mathrm T^*X, \mathrm d\alpha_X)$ is a symplectic $G$-variety.\end{example}
\begin{example}\upshape Let $\EuScript O$ be a $G$-orbit in $\frak g^*$. Then $\EuScript O$ has a {\it{}Kostant-Kirillov 2-form } $\omega(\cdot~\!, \cdot)$ defined at a point $x\in\frak g^*$ by the equality $\omega_x(\tau_{\frak g^*}p|_x, \tau_{\frak g^*}q|_x)=x([p,q])$ for $p, q\in\frak g$.\end{example}
\begin{definition}\upshape Let $(X,\omega)$ be a symplectic variety. We call a subvariety $Y\subset X$\\a) {\it isotropic} if $\omega|_{\mathrm T_yY}=0$ for a generic point $y\in Y$;\\b) {\it coisotropic} if $\omega|_{(\mathrm T_yY)^{\bot_\omega}}=0$ for a generic point $y\in Y$;\\c) {\it Lagrangian} if T$_yY=($T$_yY)^{\bot_\omega}$ for a generic point $y\in Y$ or equivalently if it is both isotropic and coisotropic.\end{definition}
\begin{example}\upshape \label{TZX} Let $X$ be a smooth variety and $Y\subset X$ be a smooth subvariety. Then the total space N$^*_{Y/X}$ of the conormal bundle to $Y$ in $X$ is Lagrangian in T$^*X$.\end{example}
\begin{proposition}[{see for example~\cite[Lemma 1.3.27]{NG}}]\label{NG} Any closed irreducible conical (i.e. $\mathbb F^*$-stable) Lagrangian $G$-subvariety of $\mathrm T^*X$ is the closure of the total space $\mathrm N^*_{Y/X}$ of the conormal bundle to a $G$-subvariety $Y\subset X$.\end{proposition}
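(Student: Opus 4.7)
The plan is to identify the candidate base $Y$ canonically from $L$, and then match $L$ with the closure of its conormal bundle over $Y$ by combining the Lagrangian property with the conical structure.

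First, I would set $Y := \overline{\pi(L)} \subset X$, where $\pi : \mathrm T^*X \to X$ is the bundle projection. Since $\pi$ is $G$-equivariant, $\pi(L)$ is $G$-stable and hence so is its closure; since $L$ is irreducible, so is $Y$. Thus $Y$ is automatically an irreducible closed $G$-subvariety of $X$. The goal is then to prove $L = \overline{\mathrm N^*_{Y/X}}$, where the right-hand side denotes the closure in $\mathrm T^*X$ of the total space of the conormal bundle over the smooth locus $Y^{\mathrm{sm}}$. Both $L$ and $\overline{\mathrm N^*_{Y/X}}$ are closed irreducible conical subvarieties of $\mathrm T^*X$ of dimension $\dim X$, so it suffices to produce a dense open subset on which they agree.

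Next, I would carry out the key symplectic computation at a generic smooth point $(y,l)\in L$ with $y\in Y^{\mathrm{sm}}$, chosen so that $\pi|_L$ is smooth at $(y,l)$ and the image of $d\pi$ on $\mathrm T_{(y,l)}L$ is all of $\mathrm T_yY$. The short exact sequence
$$0\to \mathrm T^*_yX\to \mathrm T_{(y,l)}\mathrm T^*X\to \mathrm T_yX\to 0$$
exhibits the vertical part $\mathrm T^*_yX$ as a Lagrangian subspace with the symplectic form pairing it canonically against $\mathrm T_yX$. The Lagrangian condition on $L$ then forces the kernel of $d\pi|_{\mathrm T_{(y,l)}L}$ --- i.e.\ the tangent space to the fiber $L\cap\pi^{-1}(y)$ at $(y,l)$ --- to equal the $\omega$-annihilator of $\mathrm T_yY$ in $\mathrm T^*_yX$, namely $\mathrm N^*_{Y/X}|_y$.

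Third, I would upgrade this infinitesimal equality to an equality of varieties using the $\mathbb F^*$-action. The fiber $F := L\cap\pi^{-1}(y)$ is a closed $\mathbb F^*$-stable subvariety of the vector space $\mathrm T^*_yX$ of dimension $\dim X-\dim Y$, smooth at $l$ with $\mathrm T_lF = \mathrm N^*_{Y/X}|_y$. Since scaling preserves $F$, every point $tl$ is also smooth in $F$ with the same tangent space, so the linear projection $\mathrm T^*_yX \to \mathrm T^*_yX/\mathrm N^*_{Y/X}|_y$ has vanishing differential along a Zariski-dense subset of the irreducible component of $F$ through $l$, and is therefore constant on that component. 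By matching dimensions, this component equals $\mathrm N^*_{Y/X}|_y$. This exhibits the required dense open agreement between $L$ and $\overline{\mathrm N^*_{Y/X}}$, and equality of closures then follows from irreducibility and dimension. I expect the main obstacle to be precisely this third step: leveraging the $\mathbb F^*$-orbit to turn a single tangent-space coincidence into full coincidence of fibers, and in particular verifying that the dense open locus over which the argument applies is indeed dense in $L$ (rather than concentrated at a thin subset of $Y^{\mathrm{sm}}$).
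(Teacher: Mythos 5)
The paper offers no proof of this proposition---it is quoted from \cite[Lemma 1.3.27]{NG}---so I am comparing your argument with the standard one. Your first two steps are sound: $Y=\overline{\pi(L)}$ is the right choice of base, and the identification of $\ker(\mathrm d\pi|_{\mathrm T_{(y,l)}L})$ with $\mathrm N^*_{Y/X}|_y$ at a good point follows correctly from isotropy of $\mathrm T_{(y,l)}L$ (which forces that kernel into $(\mathrm T_yY)^{\perp}\subset\mathrm T^*_yX$) together with the dimension count $\dim L=\dim X$. The gap is exactly where you anticipated it, in the third step, and it is genuine as written: the orbit $\{tl\}_{t\in\mathbb F^*}$ is one-dimensional, while the component $F_0$ of the fiber through $l$ has dimension $\dim X-\dim Y$, so $\{tl\}$ is \emph{not} Zariski-dense in $F_0$ unless that dimension is at most $1$. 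Knowing that the differential of the projection to $\mathrm T^*_yX/\mathrm N^*_{Y/X}|_y$ vanishes along a single curve does not make it vanish generically on $F_0$; for instance the affine cone $\{xz=y^2\}\subset\mathbb F^3$ is $\mathbb F^*$-stable, smooth along a ruling, and has constant tangent plane along that ruling, yet is not a linear subspace. So the inference ``constant tangent space along the ruling $\Rightarrow$ the component equals $\mathrm N^*_{Y/X}|_y$'' fails.

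The argument can be repaired in two ways. (i) Keep your strategy, but run step two at \emph{every} point of the dense open locus $U\subset L$ where $L$ is smooth and $\pi|_L$ is submersive onto $Y^{\mathrm{sm}}$: since $L\setminus U$ has dimension $<\dim L$, its trace on the fiber over a generic $y$ has dimension $<\dim X-\dim Y$, so $U\cap F_0$ is dense open in $F_0$, and at all of its points the tangent space of $F_0$ equals the \emph{same} subspace $\mathrm N^*_{Y/X}|_y$. Then your projection really is constant on $F_0$, hence $F_0$ is a translate of $\mathrm N^*_{Y/X}|_y$, and conicality (which puts $0$ in the closed cone $F_0$) kills the translate. (ii) Much shorter, and this is what the conical hypothesis is really for: conicality means the Euler vector field $E$ is tangent to $L$, and the tautological one-form satisfies $\alpha_X=\iota_E\,\mathrm d\alpha_X$; since $E\in\mathrm T_{(y,l)}L$ and this space is isotropic, $\alpha_X$ vanishes on $\mathrm T_{(y,l)}L$, which says precisely that $l(\mathrm d\pi(\xi))=0$ for all $\xi\in\mathrm T_{(y,l)}L$, i.e.\ $l\in(\mathrm T_yY)^{\perp}=\mathrm N^*_{Y/X}|_y$ directly, with no analysis of fibers at all.
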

Let $X$ be a $G$-variety. The map\begin{center} T$^*X\times\frak g\to\mathbb F$\hspace{40pt}$((x, l), g)\to l(\tau_X g|_x),$\end{center} where $g\in \frak g, x\in X, l\in$T$^*_xX$, induces a map $\phi: $T$^*X\to\frak g^*$ called the {\it{}moment map}. This map provides the following description of an orbit $ Gu\subset\frak g^*$ such that $0\in\overline{Gu}$ (in what follows we call such orbits {\it nilpotent)}. Suppose $P$ is a parabolic subgroup of $G$.
\begin{theorem}[R.~Richardson~\cite{Ri}]\label{MMapSl}  The moment map $\phi_P:\mathrm T^*(G/P)\to\frak g^*$ is a proper morphism to the closure of some nilpotent orbit $\overline{Gu}$ and is a finite morphism over $Gu$.\end{theorem}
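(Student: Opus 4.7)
The plan is to use the standard $G$-equivariant identification $\mathrm{T}^*(G/P)\cong G\times^P\mathfrak{p}^\perp$, where $\mathfrak{p}^\perp\subset\mathfrak{g}^*$ is realized as the cotangent fiber at $eP$ (i.e.\ the annihilator of $\mathfrak{p}=\mathrm{Lie}(P)$). Under this identification $\phi_P$ becomes the map $[g,\xi]\mapsto g\xi$, so its image equals $G\cdot\mathfrak{p}^\perp$.

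For properness I would factor $\phi_P$ as the closed embedding $G\times^P\mathfrak{p}^\perp\hookrightarrow G/P\times\mathfrak{g}^*$, $[g,\xi]\mapsto(gP,g\xi)$, followed by the projection $G/P\times\mathfrak{g}^*\to\mathfrak{g}^*$, which is proper because $G/P$ is projective. Under a nondegenerate invariant form on $\mathfrak{g}$, $\mathfrak{p}^\perp$ corresponds to the nilradical $\mathfrak{n}_P\subset\mathfrak{p}$, so $G\cdot\mathfrak{p}^\perp$ lies in the nilpotent cone. By properness this image is closed, and it is $G$-invariant and irreducible since the source is irreducible; since the nilpotent cone is a finite union of orbit closures, the image coincides with $\overline{Gu}$ for the unique orbit $Gu$ dense in it.

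For finiteness over $Gu$, I would appeal to Richardson's theorem that $P$ admits an open orbit on $\mathfrak{n}_P$: picking $u$ in this open orbit one has $\dim Pu=\dim\mathfrak{n}_P$, and using that $Z_G(u)^{\circ}\subset P$ for such a Richardson element, one gets $\dim Gu=2\dim G/P=\dim\mathrm{T}^*(G/P)$. Hence $\phi_P$ is generically of relative dimension zero; combined with properness this forces $\phi_P^{-1}(Gu)\to Gu$ to be finite. The main obstacle in this outline is the finiteness step, which genuinely requires the existence of an open $P$-orbit on $\mathfrak{n}_P$ together with the centralizer containment for Richardson elements; the properness statement and the identification of the image with a single orbit closure are then formal consequences of projectivity of $G/P$ and irreducibility of the source.
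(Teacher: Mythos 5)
The paper does not actually prove this statement: it is quoted as a known theorem of Richardson with only the citation \cite{Ri}, so there is no argument of the author's to compare yours against. Your outline is the standard proof and is essentially correct. The identification $\mathrm T^*(G/P)\cong G\times^P\mathfrak p^\perp$, the factorization of $\phi_P$ through the closed incidence subvariety of $G/P\times\mathfrak g^*$ followed by the proper projection off the complete factor $G/P$, and the identification of the (closed, irreducible, $G$-stable, nilpotent) image with a single orbit closure via the finiteness of the number of nilpotent orbits are all correct and complete.

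The one place to be careful is the finiteness step. You invoke both the density of the $P$-orbit of $u$ in $\mathfrak n_P$ and the containment $Z_G(u)^{\circ}\subset P$; but in the standard development the latter is deduced \emph{from} the equality $\dim Gu=2\dim\mathfrak n_P$ (one first gets $\dim Z_G(u)=\dim Z_P(u)=\dim P-\dim\mathfrak n_P$ and only then the containment), so taking it as an input risks circularity. To close the loop without it, use the orthogonality relations coming from an invariant form: $[\mathfrak g,u]^{\perp}=\mathfrak z_{\mathfrak g}(u)$ and $\mathfrak n_P^{\perp}=\mathfrak p$ give $\dim([\mathfrak g,u]\cap\mathfrak n_P)=\dim Gu-\dim Pu$, and since this space contains $[\mathfrak p,u]$ one obtains $\dim Gu\ge 2\dim Pu=2\dim\mathfrak n_P$ for $u$ in the dense $P$-orbit, while the reverse inequality is immediate from $\dim (G\times^P\mathfrak n_P)=2\dim\mathfrak n_P$. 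Then all fibers over $Gu$ are isomorphic by equivariance, hence finite, and proper plus quasi-finite implies finite. With that adjustment your proof is complete modulo Richardson's dense-orbit theorem itself, which is a fair thing to cite given that the statement is attributed to Richardson in the first place.
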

\begin{example}\upshape\label{Pn}Let $G=\mathrm{SL}_n$ and $G/P=\mathbb P(\mathbb F^n)$. Then $\phi_P(\mathrm T^*(G/P))$ (considered as a subset of $\frak{sl}_n$) coincides with the set of nilpotent matrices of rank $\le$1. The SL$_n$-orbit open in $\phi_P(\mathrm T^*(G/P))$ is the set of $\mathrm{SL}_n$-highest weight vectors in $\frak{sl}_n^*$ and is contained in the closure of any nonzero nilpotent $\mathrm{SL}_n$-orbit in $\frak{sl}_n^*$.\end{example}
For $G=$SL$_n$, each moment map $\phi_P$ corresponding to a parabolic subgroup $P$ is a birational isomorphism of $\mathrm T^*(G/P)$ with the image of $\phi_P$, and one can obtain the closure of any nilpotent orbit $\overline{Gu}$ as the image of a suitable moment map $\phi_P$. %Moreover the following computation shows that $\phi_P^*(\omega_{Gu})=$d$\alpha_{\mathrm T^*(G/P)}|_{\phi^{-1}_PGu}$:\begin{center}d$\alpha_{G/P}|_{(x, t)}(\tau_{G/P}p, \tau_{G/P}q)=(\tau_{G/P}p\cdot\alpha_{G/P}(\tau_{G/P}q))|_{(x, t)}-(\tau_{G/P}q\cdot\alpha_{G/P}(\tau_{G/P}p))|_{(x, t)}-\alpha_{G/P}([\tau_{G/P}p, \tau_{G/P}q])|_{(x, t)}=$\\$(\tau_{G/P}q\cdot\phi^*_Pq)|_{(x, t)}-(\tau_{G/P}q\cdot\phi^*_Pp-(\phi^*[p,q])|_{(x, t)})=$\\$(\phi^*_P[p,q])|_{(x, t)}-(\phi^*_P[q, p])|_{(x, t)}-(\phi^*_P[p,q])|_{(x, t)}=x([p,q])=\omega_x([p,q])$\end{center} for any $p, q\in\frak g$ and $(x, t)\in$T$^*G/P\subset \frak g^*\times G/P$.
Moreover the canonical symplectic form of T$^*(G/P)$ coincides on an open set with the pullback of the Kostant-Kirillov form of $\phi_P($T$^*(G/P))$.
\section{Preliminaries on $\frak g$-modules}
Let $\{U_i\}_{i\in\mathbb Z_{\ge 0}}$ be the standard filtration on $U:=$U$(\frak g)$. Suppose that $M$ is a U$(\frak g)$-module and that a filtration $\cup_{i\in \mathbb Z_{\ge 0}}M_i$ of vector spaces is given. We say that this filtration is {\it good} if\begin{center}(1) $U_i M_j= M_{i+j}$;\hspace{40pt}(2) dim~$M_i<\infty~$for all $i\in\mathbb Z_{\ge 0}$.\end{center}
Such a filtration arises from any finite-dimensional space of generators $M_0$. The corresponding associated graded object gr$M=\oplus_{i\in\mathbb Z_{\ge0}}M_{i+1}/M_i$ is a module over gr~U($\frak{g})\cong$S$(\frak g)$, and we set
\begin{center}J$_M:=\{s\in $S$(\frak g)$ $\mid$ there exists $k\in \mathbb Z_{\ge0}$ such that $s^km=0$ for all $m\in $gr$M\}.$\end{center}
In this way we associate to any $\frak g$-module $M$ the variety
\begin{center}V($M):=\{x\in\frak g^*\mid f(x)=0$ for all $f\in$J$_M\}$.\end{center}
It is easy to check that the module gr$M$ depends on the choice of good filtration, but the ideal J$_M$  and the variety V($M)$ does not (see for instance~\cite{Gab}). Let N$_G(\frak g^*)\subset\frak g^*$ be the union of all nilpotent orbits.
\begin{lemma}[\cite{Dix}] Let $M$ be a simple $\frak g$-module. Then $\mathrm V(M)\subset\mathrm N_G(\frak g^*)$.\end{lemma}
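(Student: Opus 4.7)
The plan is a Dixmier--Kostant style argument: I would show that the ideal $J_M$ contains every positive-degree $G$-invariant in $S(\frak g)$, then invoke Kostant's theorem to identify the resulting zero locus with $\mathrm N_G(\frak g^*)$. The first (and only non-formal) step is this: because $M$ is simple, the centre $Z:=Z(\mathrm U(\frak g))$ acts on $M$ by a single character $\chi:Z\to\mathbb F$, by Dixmier's strengthening of Schur's lemma for enveloping algebras (which crucially avoids any uncountability hypothesis on $\mathbb F$). Consequently $z-\chi(z)\in\mathrm{Ann}_{\mathrm U(\frak g)}M$ for every $z\in Z$.

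Next I would fix a good filtration $\{M_i\}$ of $M$ and use the elementary fact that if $a\in U_d$ annihilates $M$ then its principal symbol $\sigma(a)\in S^d(\frak g)$ annihilates $\mathrm{gr}\,M$: indeed $aM_i\subset aM=0$, so the induced map $M_i/M_{i-1}\to M_{i+d}/M_{i+d-1}$ is zero. In particular $\sigma(a)\in J_M$. Applied to $a=z-\chi(z)$ with $z\in Z$ of positive degree, this gives $\sigma(z)\in J_M$ for every such $z$.

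The symbol map $\sigma:Z\to S(\frak g)^G$ is surjective by the classical description of the centre of an enveloping algebra in characteristic zero (Harish-Chandra/Chevalley--Duflo), where one uses that $G$ is the adjoint group of $[\frak g,\frak g]$ so that $G$-invariants agree with $\frak g$-invariants in $S(\frak g)$. Therefore $S(\frak g)^G_+\subset J_M$, and passing to zero loci yields
$$\mathrm V(M)\subset\{x\in\frak g^*\mid f(x)=0\text{ for all }f\in S(\frak g)^G_+\}=\mathrm N_G(\frak g^*),$$
the last equality being Kostant's theorem applied to $[\frak g,\frak g]$: a point of $\frak g^*$ kills the positive-degree invariants precisely when it vanishes on the centre of $\frak g$ and projects to a nilpotent element of $[\frak g,\frak g]^*$, exactly the defining condition of $\mathrm N_G(\frak g^*)$. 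The only genuine obstacle is the opening step --- Dixmier's lemma, needed so that $Z$ acts by a character without uncountability of $\mathbb F$; everything after it is a formal combination of symbol calculus with Kostant's description of the nilpotent cone.
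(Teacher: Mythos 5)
Your argument is correct and is essentially the standard proof of this statement; the paper itself offers no proof, only the citation to Dixmier's book, where exactly this central-character-plus-Kostant argument appears. All the steps check out: Dixmier/Quillen gives the central character without any uncountability hypothesis on $\mathbb F$, the symbol of $z-\chi(z)$ annihilates $\mathrm{gr}\,M$ and hence lies in $\mathrm J_M$, symmetrization gives surjectivity of $Z(\mathrm U(\frak g))\to\mathrm S(\frak g)^{\frak g}$, and Kostant's theorem (together with the vanishing on $\frak z(\frak g)$ coming from the degree-one invariants) identifies the zero locus of $\mathrm S(\frak g)^G_+$ with $\mathrm N_G(\frak g^*)$ as defined in the paper.
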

\begin{theorem}[O.~Gabber \cite{Gab}]Any irreducible component $\tilde V$ of $\mathrm V(M)$ is coisotropic inside a unique open $G$-orbit of G$\tilde V:=\{x\in\frak g^*\mid x=gv$ for some $g\in G, v\in\tilde V\}$.\end{theorem}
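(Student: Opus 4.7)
The plan is to decompose the statement into two parts: existence of a unique open $G$-orbit $\mathcal O$ in $G\tilde V$, and coisotropy of $\tilde V$ in $\mathcal O$ with respect to the Kostant--Kirillov form.

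The first part is essentially formal. The image of the action map $G\times\tilde V\to\frak g^*$ is $G\tilde V$, and as the image of an irreducible variety it is irreducible and constructible. A $G$-stable irreducible constructible subset of $\frak g^*$ is a union of $G$-orbits and contains a unique orbit of maximal dimension, which is automatically open and dense in $G\tilde V$; call this orbit $\mathcal O$. Then $\tilde V\cap\mathcal O$ is open and dense in $\tilde V$, so coisotropy is tested at generic points of this intersection, viewing $\tilde V\cap\mathcal O$ as a subvariety of the symplectic variety $\mathcal O$ with the Kostant--Kirillov form.

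The second, substantive part relies on the Poisson structure on $\mathrm S(\frak g)\cong\mathrm{gr}\,\mathrm U(\frak g)$: since $[U_i,U_j]\subset U_{i+j-1}$, the commutator on $\mathrm U(\frak g)$ descends to a Poisson bracket $\{\cdot,\cdot\}$ on $\mathrm S(\frak g)$ whose symplectic leaves in $\frak g^*$ are exactly the coadjoint orbits with the Kostant--Kirillov form. The key input (Gabber's involutivity theorem) says that for any finitely generated $\frak g$-module $M$ with good filtration, $\{\sqrt{\mathrm J_M},\sqrt{\mathrm J_M}\}\subset\sqrt{\mathrm J_M}$. At a generic smooth point $y$ of $\tilde V$ lying in $\mathcal O$, this involutivity translates into the statement that the conormal directions to $\tilde V$ at $y$ form an isotropic subspace of $\mathrm T_y^*\mathcal O$, equivalently that $\mathrm T_y\tilde V$ contains its symplectic annihilator inside $\mathrm T_y\mathcal O$, which is exactly the coisotropy condition as defined in the text.

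The principal obstacle is establishing Gabber's involutivity. The standard proof operates entirely inside $\mathrm U(\frak g)$: given $a\in U_i$ and $b\in U_j$ representing classes in $\sqrt{\mathrm J_M}$, one expands high commutators such as $[a^p,b^q]$ using the Leibniz rule and the inclusion $[U_i,U_j]\subset U_{i+j-1}$, and exploits that sufficiently large powers of $\bar a$ and $\bar b$ annihilate $\mathrm{gr}\,M$ to conclude inductively that a power of the class $\{\bar a,\bar b\}$ also does. The delicate bookkeeping is tracking filtration degrees sharply enough that the commutator estimate genuinely descends to nilpotence on $\mathrm{gr}\,M$; this is where one really uses the filtered-ring structure together with finite generation of $\mathrm{gr}\,M$ over $\mathrm S(\frak g)$.
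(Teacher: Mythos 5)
The paper does not prove this statement at all: it is imported verbatim as a black box from Gabber's article \cite{Gab} (together with its corollaries due to Fernando), so there is no internal argument to compare yours against. Your overall architecture is nevertheless the standard one: reduce coisotropy of $\tilde V$ inside the open orbit of $G\tilde V$ to the involutivity $\{\sqrt{\mathrm J_M},\sqrt{\mathrm J_M}\}\subset\sqrt{\mathrm J_M}$ of the radical of the characteristic ideal, using that the symplectic leaves of the Poisson structure on $\frak g^*$ are the coadjoint orbits with the Kostant--Kirillov form. That reduction, and the translation ``conormal directions are isotropic $\Leftrightarrow$ $\mathrm T_y\tilde V\supset(\mathrm T_y\tilde V)^{\bot_\omega}$'', are correct.

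The genuine gap is in the step you yourself flag as the principal obstacle. The argument you sketch for involutivity --- expand $[a^p,b^q]$ by the Leibniz rule and induct on filtration degree --- is precisely the naive attack that is known not to close. Its leading symbol is $pq\,\bar a^{\,p-1}\bar b^{\,q-1}\{\bar a,\bar b\}$ modulo lower order, and since $\bar a^{\,p-1}\bar b^{\,q-1}$ may already annihilate the relevant part of $\mathrm{gr}\,M$, one extracts no control whatsoever on $\{\bar a,\bar b\}$ itself; no amount of ``delicate bookkeeping'' of filtration degrees repairs this, which is exactly why the integrability of the characteristic variety was a hard open problem before Gabber. His actual proof is a substantially different and more elaborate argument (and the known alternatives, e.g.\ the microlocal proof of Kashiwara--Kawai--Sato or Knop's later simplification, are likewise nontrivial), so as a self-contained proof your proposal is missing its central ingredient. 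A secondary, fixable point: your claim that an irreducible $G$-stable constructible subset of $\frak g^*$ automatically contains a unique open orbit is false in general (a pencil of equidimensional orbits is a counterexample); in the context of this paper one gets it from the preceding lemma, which places $\mathrm V(M)$ inside the nilpotent cone, where $G$ has only finitely many orbits, or else one must phrase coisotropy leafwise at a generic point rather than inside a single open orbit.
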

\begin{corollary}[S.~Fernando \cite{F}] Set $\mathrm V(M)^\bot:=\{g\in\frak g\mid v(g)=0$ for all $v\in \mathrm{V(}M\mathrm )\subset\frak g^*\}$. Then $\mathrm V(M)^\bot$ is a Lie algebra and $\mathrm V(M)$ is a $\mathrm V(M)^\bot$-variety.\end{corollary}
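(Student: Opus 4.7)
The plan is to reduce both statements to the single fact that $\mathrm J_M$ is a Poisson ideal in $\mathrm S(\frak g)=\mathrm{gr}\,\mathrm U(\frak g)$ with respect to the Kostant--Kirillov bracket $\{\cdot,\cdot\}$. Recall that this bracket restricts to the Lie bracket on linear functions ($\{g_1,g_2\}=[g_1,g_2]$ for $g_i\in\frak g$), and that the Hamiltonian vector field $\{g,\cdot\}$ on $\frak g^*$ coincides with the coadjoint vector field $\tau_{\frak g^*}g$.

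The Poisson-ideal property of $\mathrm J_M$ is precisely the algebraic content of the Gabber theorem cited above. Indeed, coisotropy of each irreducible component $\tilde V$ of $\mathrm V(M)$ inside its $G$-orbit closure means that at a generic smooth point $v$ one has $(\mathrm T_v\tilde V)^{\bot_\omega}\subset\mathrm T_v\tilde V$; this symplectic perpendicular is spanned by Hamiltonian vector fields $\{f,\cdot\}$ with $f$ in the ideal of $\tilde V$, so those vector fields are tangent to $\tilde V$. Equivalently $\mathrm I(\tilde V)$ is Poisson, and intersecting over components gives $\mathrm J_M=\bigcap\mathrm I(\tilde V)$ Poisson as well.

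Once $\mathrm J_M$ is known to be Poisson, the corollary follows quickly. Since $\mathrm J_M$ is radical by its very definition, $\mathrm V(M)^\bot=\frak g\cap\mathrm J_M$. For $g_1,g_2\in\mathrm V(M)^\bot$, the bracket $[g_1,g_2]=\{g_1,g_2\}$ lies in $\mathrm J_M\cap\frak g=\mathrm V(M)^\bot$, proving (a). For (b), assign to each $g\in\mathrm V(M)^\bot$ the restriction of the Hamiltonian vector field $\{g,\cdot\}$ to $\mathrm V(M)$; because $\{g,\mathrm J_M\}\subset\mathrm J_M$, this vector field is tangent to $\mathrm V(M)$ and defines an element of $\EuScript T\mathrm V(M)$. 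The resulting map $\mathrm V(M)^\bot\to\EuScript T\mathrm V(M)$ is a homomorphism of Lie algebras by the Jacobi identity combined with $\{g_1,g_2\}=[g_1,g_2]$. The main obstacle is the cited Gabber theorem itself; once accepted, the remainder is a formal computation in the Kostant--Kirillov formalism.
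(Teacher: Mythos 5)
Your derivation is correct and matches the route the paper intends: the corollary is stated there without proof as a consequence of the preceding theorem of Gabber, and your reduction to the involutivity of $\mathrm J_M$, followed by the observations that $\mathrm V(M)^\perp=\frak g\cap\mathrm J_M$ (using that $\mathrm J_M$ is radical by construction, so it equals $\mathrm I(\mathrm V(M))$) and that the Hamiltonian vector fields of linear functions are the coadjoint vector fields, is the standard argument going back to Fernando. One terminological caution: what coisotropy of the irreducible components actually yields is that $\mathrm J_M$ is closed under the Poisson bracket, $\{\mathrm J_M,\mathrm J_M\}\subset\mathrm J_M$, not that it is a Poisson ideal in the usual sense $\{\mathrm S(\frak g),\mathrm J_M\}\subset\mathrm J_M$; fortunately both places where you invoke the property only bracket two elements of $\mathrm J_M$ (since $g\in\mathrm V(M)^\perp\subset\mathrm J_M$), so the weaker, correct statement suffices and your argument stands as written.
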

\begin{theorem}[S.~Fernando~{\cite[Cor. 2.7]{F}}, V.~Kac~\cite{KC2}]Set $\frak g[M]:=\{g\in\frak g\mid\mathrm{dim}(\mathrm{span}_{i\in\mathbb Z_{\ge 0}}\{ g^im\})< \infty$ for all $m\in M\}$. Then $\frak g[M]$ is a Lie algebra and $\frak g[M]\subset\mathrm V(M)^\bot$.\end{theorem}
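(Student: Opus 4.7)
The plan is to prove the two claims separately: first the inclusion $\frak g[M]\subset \mathrm V(M)^\bot$ by a direct associated-graded computation, and then closure of $\frak g[M]$ under sum and Lie bracket.

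For the inclusion, I fix $g\in \frak g[M]$ and a good filtration $M_0\subset M_1\subset\cdots$ of $M$. For any $m\in M_i\setminus M_{i-1}$, local finiteness of $g$ on $M$ yields a monic polynomial $p(t)=t^d+(\text{lower order})$ with $p(g)m=0$. The top-degree contribution of this identity, read off in $M_{i+d}/M_{i+d-1}$, is exactly $g^d\bar m=0$ in $\overline{\mathrm{gr}}M$, so $g$ acts locally nilpotently on $\overline{\mathrm{gr}}M$. Since $\overline{\mathrm{gr}}M$ is finitely generated over the Noetherian commutative ring $\mathrm S(\frak g)$, choose generators $\bar m_1,\ldots,\bar m_s$ and exponents $d_j$ with $g^{d_j}\bar m_j=0$; by $\mathrm S(\frak g)$-linearity the single exponent $D=\max_j d_j$ satisfies $g^D\cdot\overline{\mathrm{gr}}M=0$. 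Hence $g^D\in \mathrm J_M$, and by the (radical) definition of $\mathrm J_M$ we have $g\in \mathrm J_M$. This says precisely that the linear function $g$ on $\frak g^*$ vanishes on $\mathrm V(M)$, i.e.\ $g\in \mathrm V(M)^\bot$.

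For the Lie-algebra property, closure under scalar multiplication is immediate. Closure under sum and bracket is the subtle part. My plan is to combine Fernando's corollary (stated above), which says that $\mathrm V(M)^\bot$ is itself a Lie subalgebra of $\frak g$ (indeed an ideal, since $\mathrm V(M)$ is $G$-invariant as the support of the $G$-equivariant $\mathrm S(\frak g)$-module $\overline{\mathrm{gr}}M$), with a direct finite-dimensional invariance argument. Given $g_1,g_2\in\frak g[M]$, the first step places $g_1+g_2$ and $[g_1,g_2]$ inside $\mathrm V(M)^\bot$; to upgrade this to membership in $\frak g[M]$, I let $\frak l\subset\mathrm V(M)^\bot$ be the Lie subalgebra generated by $g_1,g_2$ and, for each $m\in M$, aim to produce a finite-dimensional $\frak l$-invariant subspace $N\subset M$ containing $m$. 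Any element of $\frak l$ then restricts to a finite-dimensional, hence algebraic, operator on $N$, so acts locally finitely on $M$; thus $\frak l\subset\frak g[M]$.

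The main obstacle is producing these finite-dimensional $\frak l$-invariant subspaces: pointwise algebraicity of $g_1$ and $g_2$ is not enough to force $U(\frak l)m$ to be finite-dimensional, as sums and commutators of algebraic endomorphisms need not be algebraic. The extra input is the geometric fact that every element of $\frak l$ vanishes on $\mathrm V(M)$, together with Gabber's coisotropy theorem above, which bounds how the joint $\frak l$-action can grow on the filtered pieces of $M$. Controlling this growth via a Hilbert-series argument on $\overline{\mathrm{gr}}M$ is the technical heart of the proof and the place where the symplectic geometry of the support variety enters essentially.
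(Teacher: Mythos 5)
The paper offers no proof of this statement at all: it is imported as a known result of Fernando and Kac, so there is no internal argument to compare yours against. Judged on its own merits, your first half is correct and is the standard argument: pushing a monic annihilating polynomial of $g$ on $m$ to top order in a good filtration shows that the symbol of $g$ acts locally nilpotently on $\mathrm{gr}M$ (note the computation lives in $\mathrm{gr}M$, not in $\overline{\mathrm{gr}}M$, but this is harmless), finite generation over the N\"otherian ring $\mathrm S(\frak g)$ gives a uniform exponent, hence $g\in\mathrm J_M$ and $g$ vanishes on $\mathrm V(M)$, i.e.\ $g\in\mathrm V(M)^\bot$.

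The second half --- closure of $\frak g[M]$ under sums and brackets --- is the substantive content of the Fernando--Kac theorem, and your proposal does not prove it. Using the Corollary to place $g_1+g_2$ and $[g_1,g_2]$ inside $\mathrm V(M)^\bot$ is legitimate but buys nothing: what you then need is that every element of the Lie algebra $\frak l$ generated by $g_1,g_2$ acts locally finitely on $M$, which is precisely (an inductive restatement of) the claim to be proved, and you explicitly defer this step, the self-described ``technical heart,'' to an unspecified Hilbert-series/coisotropy argument. That route cannot succeed as described. For $g\in\frak g$, membership in $\mathrm V(M)^\bot$ is equivalent to $g\in\mathrm J_M$, i.e.\ to an estimate of the form $g^D M_i\subset M_{i+D-1}$ for all $i$; such a degree-drop on the filtration never by itself forces $\mathrm{span}\{g^km\}$ to be finite-dimensional, since the iterates $g^{kD}m$ are only confined to $M_{k(D-1)}$, which exhausts $M$. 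Gabber's theorem constrains the geometry of $\mathrm V(M)$, not the filtered action on $M$ itself, so it supplies no additional finiteness; in other words, local finiteness is genuinely finer information than vanishing on the associated variety, and no argument carried out purely on $\overline{\mathrm{gr}}M$ can detect it. A correct proof must work directly with the $U(\frak g)$-module structure of $M$ --- e.g.\ via the facts that $M^{[x]}:=\{m\in M\mid\dim\mathrm{span}_i\{x^im\}<\infty\}$ is a $U(\frak g)$-submodule and that $U_n(\frak g)N$ is finite-dimensional and $x$-stable for any finite-dimensional $x$-stable $N$, as in Fernando's and Kac's original arguments. As it stands, your proposal establishes only the inclusion $\frak g[M]\subset\mathrm V(M)^\bot$, not the Lie-algebra property.
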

\begin{corollary}Let $M$ be a $(\frak g, \frak k)$-module. Then $\mathrm{V(}M\mathrm)\subset\frak k^\bot$ and $\mathrm{V(}M\mathrm)$ is a $\frak k$-variety.\end{corollary}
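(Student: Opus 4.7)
The corollary is a direct consequence of the two immediately preceding results, so the plan is to chain them together.

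First I would show $\frak k \subset \frak g[M]$. By definition, a $(\frak g, \frak k)$-module $M$ is locally finite as a $\frak k$-module: for every $m \in M$ and every $k \in \frak k$, the vector $m$ is contained in a finite-dimensional $\frak k$-submodule, so in particular $\mathrm{span}_{i \ge 0}\{k^i m\}$ is finite-dimensional. This is precisely the defining condition for $k \in \frak g[M]$, hence $\frak k \subset \frak g[M]$.

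Next I would invoke the Fernando--Kac theorem to conclude $\frak g[M] \subset \mathrm V(M)^\bot$, so that $\frak k \subset \mathrm V(M)^\bot$. Translating this containment of subspaces of $\frak g$ into a containment of subspaces of $\frak g^*$ via the perfect pairing between $\frak g$ and $\frak g^*$ yields $\mathrm V(M) \subset \frak k^\bot$, which is the first assertion.

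For the second assertion, Fernando's corollary gives that $\mathrm V(M)$ is a $\mathrm V(M)^\bot$-variety, i.e.\ there is a Lie algebra homomorphism $\tau_{\mathrm V(M)}: \mathrm V(M)^\bot \to \EuScript T\mathrm V(M)$. Composing with the inclusion of Lie algebras $\frak k \hookrightarrow \mathrm V(M)^\bot$ established above produces the required homomorphism $\frak k \to \EuScript T \mathrm V(M)$, so $\mathrm V(M)$ is a $\frak k$-variety.

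There is no real obstacle here; the statement is a formal consequence of the two prior results once one observes that the locally finite hypothesis built into the definition of a $(\frak g, \frak k)$-module is exactly what places $\frak k$ inside $\frak g[M]$. The only thing to be careful about is keeping straight that $\mathrm V(M)$ lives in $\frak g^*$ while $\mathrm V(M)^\bot$ lives in $\frak g$, so that the containments $\frak k \subset \mathrm V(M)^\bot$ and $\mathrm V(M) \subset \frak k^\bot$ are dual formulations of the same fact.
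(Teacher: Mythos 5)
Your proposal is correct and is exactly the chain of implications the paper intends (the corollary is stated without proof precisely because it follows this way): local finiteness gives $\frak k\subset\frak g[M]$, the Fernando--Kac theorem gives $\frak g[M]\subset\mathrm V(M)^\bot$, dualizing yields $\mathrm V(M)\subset\frak k^\bot$, and restricting the $\mathrm V(M)^\bot$-variety structure from Fernando's corollary to $\frak k$ gives the second assertion. Nothing is missing.
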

\section{Proof of Theorem~\ref{T1}}
Let $M$ be a $(\frak g, \frak k)$-module and $M_0$ be a $\frak k$-stable finite-dimensional space of generators of $M$; $\mathrm J_M,$ gr$M$ be the corresponding objects costructed as in Section 4. Consider the S$(\frak g)$-modules\begin{center}  J$_M^{-i}\{0\}:=\{m\in$gr$M\mid j_1...j_im=0$ for all $j_1,...,j_i\in$J$_M~\}$.\end{center} One can easily see that these modules form an ascending filtration of gr$M$ such that \begin{center}$\cup_{i=1}^\infty$J$_M^{-i}\{0\}=$gr$M$.\end{center} Since S$(\frak g)$ is a N\"otherian ring, the filtration stabilizes, i.e. J$_M^{-i}\{0\}=$gr$M$ for some $i$. By $\overline{\mathrm{gr}}M$ we denote the corresponding graded object. By definition, $\overline{\mathrm{gr}}M$ is an S$(\frak g)/$J$_M$-module. Suppose that $f\overline{\mathrm{gr}}M=0$ for some $f\in$S$(\frak g)$. Then $f^i$gr$M=0$ and hence $f\in$J$_M$. This proves that the annihilator of $\mathrm{\overline{gr}}M$ in $\mathrm S(\frak g)/\mathrm J_M$ equals zero.

The following lemma is a reformulation in the terms of the present paper of a result of \'E.~Vinberg and B.~Kimelfeld~\cite[Thm.~2]{VK}.
\begin{lemma}A quasiaffine algebraic $K$-variety $X$ is $\frak k$-spherical if and only if the space of regular functions $\mathbb F[X]$ is a bounded $\frak k$-module. If the variety $X$ is irreducible, then $\mathbb F[X]$ is a multiplicity-free $\frak k$-module.\end{lemma}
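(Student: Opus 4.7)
\emph{Plan.} The lemma is essentially a reformulation of the Vinberg--Kimelfeld theorem; I would adapt it to the quasiaffine, possibly reducible setting as follows. Let $B=TU\subset K$ be a Borel subgroup with unipotent radical $U$ and maximal torus $T$. Since $K$ is reductive and connected and $\mathbb F[X]$ is a rational $K$-module, it decomposes as $\bigoplus_\lambda V_\lambda\otimes\mathbb F[X]^{U}_\lambda$, where the multiplicity of the simple $K$-module $V_\lambda$ equals the dimension of the $\lambda$-weight space $\mathbb F[X]^U_\lambda$ of the $U$-invariants under $T$. Hence $\mathbb F[X]$ is bounded as a $\frak k$-module iff the dimensions of these weight spaces are uniformly bounded, and multiplicity-free iff each is at most one-dimensional.

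\emph{Irreducible case.} Assume $X$ is irreducible quasiaffine. If $B$ has an open orbit $Bx_{0}\subset X$, then any $f\in\mathbb F[X]^{U}_\lambda$ is determined up to scalar by its value $f(x_{0})$, since $Bx_{0}$ is dense and $f$ transforms by a character of $B$; thus $\dim\mathbb F[X]^{U}_\lambda\le 1$ and $\mathbb F[X]$ is multiplicity-free. Conversely, if no Borel of $K$ has an open orbit on $X$, then $\mathbb F(X)^{B}$ has positive transcendence degree over $\mathbb F$. Any non-constant element can be written as a ratio $p/q$ with $p,q\in\mathbb F[X]^{U}$ of the same $T$-weight $\mu$; the elements $p^{i}q^{k-i}$ for $0\le i\le k$ lie in $\mathbb F[X]^{U}_{k\mu}$ and are linearly independent (since $p/q$ is transcendental over $\mathbb F$), forcing multiplicities to grow without bound.

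\emph{Reducible case.} Since $K$ is connected, it stabilizes every irreducible component $X_{i}$ of $X$. Let $\bar X=\mathrm{Spec}\,\mathbb F[X]$ be the affine hull, so that the natural map $X\hookrightarrow\bar X$ is an open immersion and the irreducible components of $\bar X$ are the closures $\bar X_{i}$ of the $X_{i}$. Restriction gives both a $K$-equivariant surjection $\mathbb F[X]=\mathbb F[\bar X]\twoheadrightarrow\mathbb F[\bar X_{i}]$ and a $K$-equivariant injection $\mathbb F[X]\hookrightarrow\bigoplus_{i}\mathbb F[X_{i}]$. Since $X_{i}$ is open and dense in $\bar X_{i}$, a Borel of $K$ has an open orbit on $X_{i}$ iff it has one on $\bar X_{i}$, and by the irreducible case $\mathbb F[\bar X_{i}]$ is bounded iff it is multiplicity-free iff $\bar X_{i}$ is $\frak k$-spherical. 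If every $X_{i}$ is $\frak k$-spherical, each $\mathbb F[X_{i}]$ is multiplicity-free, and the injection into $\bigoplus_{i}\mathbb F[X_{i}]$ bounds the $\frak k$-multiplicities of $\mathbb F[X]$ by the number of components. Conversely, if some $X_{i}$ is not $\frak k$-spherical, then $\mathbb F[\bar X_{i}]$ already has unbounded multiplicities and the surjection $\mathbb F[X]\twoheadrightarrow\mathbb F[\bar X_{i}]$ transfers this to $\mathbb F[X]$.

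\emph{Main obstacle.} Once the weight-space/semi-invariant translation is in place, the real content is the irreducible affine version of Vinberg--Kimelfeld; the rest is bookkeeping. The subtlest point is the linear independence of the $p^{i}q^{k-i}$ in the irreducible case, which rests on the Rosenlicht-type fact that $\mathrm{tr\,deg}_{\mathbb F}\,\mathbb F(X)^{B}>0$ whenever $B$ has no open orbit on an irreducible $X$; a secondary point is verifying that passage from $X$ to its affine hull preserves both $\frak k$-sphericity of components and boundedness on the level of rings of regular functions, which is what lets us reduce the quasiaffine statement to the affine one.
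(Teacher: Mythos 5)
Your proof is essentially correct, but it cannot be ``the same approach as the paper'' for a simple reason: the paper gives no proof of this lemma at all. It is stated as a reformulation of Vinberg--Kimelfeld \cite[Thm.~2]{VK} and the burden is discharged by citation. What you have written is, in substance, the standard Vinberg--Kimelfeld argument itself --- identifying $\frak k$-multiplicities with dimensions of $T$-weight spaces of $U$-invariants, using density of an open $B$-orbit to pin down a $B$-semiinvariant by its value at one point, and in the converse direction invoking Rosenlicht to produce a nonconstant $B$-invariant rational function $p/q$ with $p,q$ semiinvariant regular functions of equal weight, whose monomials $p^iq^{k-i}$ blow up the multiplicity of the weight $k\mu$ --- together with the bookkeeping needed to pass from the irreducible affine case to the reducible quasiaffine one. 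This is a reasonable and correct expansion of what the paper leaves implicit; the one genuinely non-obvious ingredient you use without proof, namely that every nonconstant element of $\mathbb F(X)^B$ is a ratio of two $B$-semiinvariants in $\mathbb F[X]$ of the same weight, is exactly where quasiaffineness enters (take a $B$-eigenvector in the nonzero $B$-stable denominator ideal $\{q\in\mathbb F[X]: qh\in\mathbb F[X]\}$), and it would be worth one sentence.

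One imprecision to repair in the reducible case: for a quasiaffine variety $X$ the algebra $\mathbb F[X]$ need not be finitely generated, so $\mathrm{Spec}\,\mathbb F[X]$ need not be a variety, and speaking of its irreducible components and of $X\hookrightarrow\bar X$ as an open immersion of varieties is not justified as stated. This is easily fixed: choose a finite-dimensional $K$-submodule $V\subset\mathbb F[X]$ containing the coordinates of some locally closed embedding of $X$ into affine space, so that $X\to V^*$ is a $K$-equivariant locally closed immersion, and let $Y$ be the closure of $X$ in $V^*$. Then $Y$ is an affine $K$-variety containing $X$ as a dense open subset, $\mathbb F[Y]\hookrightarrow\mathbb F[X]$, and the surjection $\mathbb F[Y]\twoheadrightarrow\mathbb F[Y_i]$ onto the closure of a non-spherical component, combined with semisimplicity of rational $K$-modules, transfers the unbounded multiplicities of $\mathbb F[Y_i]$ to $\mathbb F[X]$ exactly as you intend. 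With that substitution your argument is complete.
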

\begin{theorem}[D.~Panyushev {~\cite[Thm 2.1]{Pan}}]\label{Pan}Let $X$ be a smooth $G$-variety and $M$ a smooth locally closed $G$-stable subvariety. Then, for any Borel subgroup $B\subset G$, the generic stabilizers of the actions of $B$ on $X,\mathrm N_{M/X}$ and $\mathrm N_{M/X}^*$ are isomorphic.\label{MPan}\end{theorem}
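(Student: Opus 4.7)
The plan is to establish the two isomorphisms $\mathrm{Stab}^{\mathrm{gen}}_B(X)\cong\mathrm{Stab}^{\mathrm{gen}}_B(\mathrm N_{M/X})$ and $\mathrm{Stab}^{\mathrm{gen}}_B(\mathrm N_{M/X})\cong\mathrm{Stab}^{\mathrm{gen}}_B(\mathrm N^*_{M/X})$ separately, each by localizing the picture near a single generic $B$-orbit inside $M$. Since $M$ is $G$-stable it is also $B$-stable; shrink $M$ to a dense $B$-stable open $M_0$ over which the $B$-stabilizer is constantly some connected subgroup $H\subset B$. Restricted over $M_0$, both $\mathrm N_{M/X}$ and $\mathrm N^*_{M/X}$ become $B$-equivariant vector bundles, and the generic $B$-stabilizer at a point $(m,v)$, respectively $(m,\xi)$, is exactly the $H$-stabilizer of $v\in V:=\mathrm N_{M/X,m}$, respectively $\xi\in V^*$.

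For the isomorphism between $\mathrm N_{M/X}$ and $\mathrm N^*_{M/X}$, the problem therefore reduces to showing that for the connected solvable group $H$ and its rational module $V$, the generic $H$-stabilizer on $V$ is isomorphic to that on $V^*$. For the toral quotient of $H$ this is transparent: the $T$-weights on $V^*$ are the negatives of those on $V$, so the common kernel of the weights --- which is the generic $T$-stabilizer on either space --- is unchanged. The unipotent radical of $H$ is handled by a separate orbit-theoretic argument using the triangularizability of unipotent actions in characteristic zero, yielding in the end an abstract-group isomorphism $H_v\cong H_\xi$ for generic $v\in V$ and $\xi\in V^*$.

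For the isomorphism between $X$ and $\mathrm N_{M/X}$, the natural tool is deformation to the normal cone: there is a $G$-equivariant flat family $\pi:\mathcal X\to\mathbb A^1$ with $\pi^{-1}(t)\cong X$ for $t\ne 0$ and $\pi^{-1}(0)\cong\mathrm N_{M/X}$. Upper semicontinuity of stabilizers in this family produces, up to conjugation in $B$, an inclusion of the generic $B$-stabilizer of $X$ into that of $\mathrm N_{M/X}$. Equality of dimensions is automatic from $\dim X=\dim M+\mathrm{rk}\,\mathrm N_{M/X}=\dim\mathrm N_{M/X}$; dimension then promotes to an abstract-group isomorphism because both stabilizers are connected solvable subgroups of $B$, determined up to isomorphism by their toral and unipotent ranks, which match by tracking a common maximal torus of $B$ through the family.

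The main obstacle is ruling out that the stabilizer ``jumps'' strictly from the general fiber $X$ to the special fiber $\mathrm N_{M/X}$ of the deformation: upper semicontinuity by itself permits such jumps. The cleanest remedy is a local, $B$-equivariant \'etale (or formal) identification between a neighborhood of the zero section in $\mathrm N_{M/X}$ and a neighborhood of $M$ in $X$. In characteristic zero this identification does exist --- a $G$-equivariant formal tubular neighborhood, constructed by averaging splittings over the reductive group $G$ --- and this is the technical heart of the argument, as it transfers not merely orbit dimensions but the actual isomorphism type of the $B$-stabilizer required by the statement.
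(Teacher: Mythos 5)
First, a remark on what you are being compared against: the paper does not prove this statement at all --- it is quoted verbatim from Panyushev's paper \cite[Thm 2.1]{Pan} and used as a black box --- so your attempt has to stand on its own. Your two-step architecture is reasonable, and the reduction of the comparison of $\mathrm N_{M/X}$ with $\mathrm N_{M/X}^*$ to a single fibre $V=\mathrm N_{M/X,m}$ acted on by the generic isotropy group $H=B_m$ is sound, since over the orbit $Bm\cong B/H$ the bundle is $B\times_HV$. But in that step the entire content of the theorem is the claim that a connected solvable $H$ has isomorphic generic stabilizers on $V$ and on $V^*$: negating torus weights handles only the toral quotient, the stabilizer $H_v$ does not in general split into independent toral and unipotent contributions, and ``a separate orbit-theoretic argument using triangularizability'' is a name for the missing lemma rather than a proof of it.

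The comparison of $X$ with $\mathrm N_{M/X}$ is where the proposal genuinely fails. (a) Semicontinuity in the deformation to the normal cone only bounds the generic stabilizer of the special fibre from below by that of $X$; your reverse inequality ``$\dim X=\dim\mathrm N_{M/X}$, hence the stabilizer dimensions agree'' is a non sequitur, since equality of dimensions of two $B$-varieties says nothing about their generic orbit dimensions. (b) The proposed fix --- a $G$-equivariant formal or \'etale tubular neighbourhood of $M$ in $X$ obtained ``by averaging splittings over the reductive group $G$'' --- does not exist in the algebraic category: averaging is a compact-group, $C^\infty$ technique, and the algebraic substitute (Luna's slice theorem) works only \'etale-locally near a closed orbit in an affine variety, not along all of a locally closed $G$-stable $M$. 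You correctly identify this as the technical heart, but the heart is absent. (c) Connected solvable groups are not classified by their toral and unipotent ranks ($\mathbb G_a\times\mathbb G_m$ and $\mathbb G_a\rtimes\mathbb G_m$ share both), so even matching dimensions would not promote to the isomorphism of stabilizers the theorem asserts. The workable route, and in essence Panyushev's, replaces the tubular neighbourhood by an algebra-level degeneration: filter $\mathbb F[X_0]$, for a suitable $B$-stable affine open $X_0$ meeting $M$, by powers of the ideal of $M$, identify the associated graded algebra with $\mathbb F[\mathrm N_{M_0/X_0}]$, and track $U$-invariants and $B$-weights through this degeneration to obtain the missing opposite inequalities.
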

\hspace{-11pt}{\bf Proposition~\ref{Pav}.} {\it a) The module $M$ is bounded if and only if its support variety $\mathrm V(M)$ is $\frak k$-spherical.\\ b) If the equivalent conditions of a) are satisfied any irreducible component $\tilde V$ of $\mathrm V(M)$ is a conical Lagrangian subvariety of $G\tilde V$.}
\begin{proof}a)As $M$ is a finitely generated $\frak g$-module, the S$(\frak g)$-modules gr$M$ and $\mathrm{\overline{gr}}M$ are finitely generated. Let $\tilde M_0$ be a $\frak k$-stable finite-dimensional space of generators of $\mathrm{\overline{gr}}M$. Then there is a surjective homomorphism \begin{center}$\psi: \tilde M_0\otimes_\mathbb F$(S$(\frak g)/$J$_M)\to\overline{\mathrm{gr}}M$.\end{center} Suppose that the variety V$(M)$ is $\frak k$-spherical. Then $\tilde M_0\otimes_\mathbb F$(S$(\frak g)/$J$_M$) is a bounded $\frak k$-module. Therefore $\overline{\mathrm{gr}}M$ is bounded, which implies that $M$ is a bounded $\frak k$-module too.

Assume now that a $\frak g$-module $M$ is $\frak k$-bounded. Set\begin{center}Rad$M=\{m\in\overline{\mathrm{gr}}M\mid fm=0$ and $f\ne 0$ for some $f\in$S$(\frak g)/$J$_M\}$.\end{center} Then Rad$M$ is a proper $\frak k$-stable submodule of $\overline{\mathrm{gr}}M$ and $\tilde M_0\not\subset$Rad$M$. The homomorphism $\psi$ induces the injective homomorphism $\hat\psi:$S$(\frak g)/$J$_M\to\tilde M_0^*\otimes_\mathbb F\overline{\mathrm{gr}}M$. Therefore S$(\frak g)/$J$_M$ is a $\frak k$-bounded module and V($M$) is a $\frak k$-spherical variety. This completes the proof of a).\\
b) Let $\tilde V\subset\mathrm V(M)$ be an irreducible component and $x\in\tilde V$ be a generic point. As $x\in\frak k^\bot$ we have \begin{center}$x([k_1, k_2])=\omega_x(\tau_{\frak g^*}k_1|_x, \tau_{\frak g^*}k_2|_x)=0$\end{center} for all $k_1, k_2\in\frak k$. Therefore any $K$-orbit in $\frak k^\bot$ is isotropic. As $\tilde V$ is a $\frak k$-spherical variety, $\tilde V$ has an open $K$-orbit. Therefore $\tilde V$ is Lagrangian in $G\tilde V$ and  this completes the proof of b).\end{proof}
\begin{theorem}\label{M_to_Gr}Assume that $\frak g=\frak{sl}_n$. If there exists a simple infinite-dimensional bounded $(\frak{sl}_n, \frak k)$-module $M$, then $\mathrm{Gr}(r,\mathbb F^n)$ is a spherical $\frak k$-variety for some $r$.\end{theorem}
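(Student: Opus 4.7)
The plan is to use Proposition~\ref{Pav} together with the fact that every nilpotent orbit closure in $\frak{sl}_n^*$ is the image under a birational symplectic moment map of the cotangent bundle of a partial flag variety, and then to invoke Panyushev's theorem for the action of $K$ in order to transfer sphericality from the Lagrangian subvariety upstairs down to the whole flag variety.

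First I would pick an irreducible component $\tilde V$ of $\mathrm V(M)$. Since $M$ is simple, $\mathrm V(M)\subset\mathrm N_G(\frak g^*)$, so $G\tilde V$ is the closure $\overline{Gu}$ of a single nilpotent orbit. Proposition~\ref{Pav} guarantees that $\tilde V$ is conical, Lagrangian in $\overline{Gu}$, and $\frak k$-spherical. For $G=\mathrm{SL}_n$, the remarks after Example~\ref{Pn} provide a parabolic $P\subset G$ for which $\phi_P:\mathrm T^*(G/P)\to\overline{Gu}$ is birational and a symplectic isomorphism over $Gu$. The next step is to lift $\tilde V$: set $\tilde W:=\overline{\phi_P^{-1}(\tilde V\cap Gu)}$. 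Since $\phi_P$ is $K$-equivariant and birational, $\tilde W$ is a closed irreducible conical Lagrangian $K$-stable subvariety of $\mathrm T^*(G/P)$, and it inherits an open $B_K$-orbit from $\tilde V$ through the birational identification.

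I would then apply Proposition~\ref{NG} (whose proof works equally well for $K$-subvarieties): letting $\pi:\mathrm T^*(G/P)\to G/P$ be the projection and $Y:=\pi(\tilde W)$, the set $Y$ is a closed irreducible $K$-stable subvariety of $G/P$, and on its (nonempty, $K$-stable) smooth locus $Y^\circ$ one has $\tilde W\cap\pi^{-1}(Y^\circ)=\mathrm N^*_{Y^\circ/(G/P)}$. Next, Theorem~\ref{Pan} is applied to the $K$-action on the smooth $K$-stable subvariety $Y^\circ\subset G/P$; Panyushev's argument uses only the connected reductivity of the acting group, so it goes through with $K$ in place of $G$. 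The output is that the generic $B_K$-stabilizers on $G/P$ and on $\mathrm N^*_{Y^\circ/(G/P)}$ are isomorphic. Since $\tilde W$ is $\frak k$-spherical and $\mathrm N^*_{Y^\circ/(G/P)}$ is open dense in $\tilde W$, $B_K$ has an open orbit on $\mathrm N^*_{Y^\circ/(G/P)}$; the Lagrangian dimension count $\dim\mathrm N^*_{Y^\circ/(G/P)}=\dim G/P$ then shows that the generic $B_K$-stabilizer on $G/P$ also has dimension $\dim B_K-\dim G/P$, so $B_K$ has an open orbit on $G/P$, i.e.\ $G/P$ is $\frak k$-spherical.

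Finally, $G/P$ is a partial flag variety $\mathrm{Fl}(d_1,\ldots,d_s;\mathbb F^n)$; for any $j$, the natural $K$-equivariant projection $G/P\to\mathrm{Gr}(d_j,\mathbb F^n)$ sends the open $B_K$-orbit to an open $B_K$-invariant subset, so $\mathrm{Gr}(d_j,\mathbb F^n)$ is spherical as a $\frak k$-variety, which is what was to be shown. The main technical obstacle in this plan is verifying that Proposition~\ref{NG} and Theorem~\ref{Pan} admit the $K$-equivariant reformulations used above: both reformulations are essentially formal, but one has to go through the original arguments to confirm that nothing beyond the connected reductivity of the acting group is being used.
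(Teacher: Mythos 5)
Your proposal is correct and follows essentially the same route as the paper: lift the $\frak k$-spherical Lagrangian component $\tilde V$ through the birational moment map to a conical Lagrangian subvariety of $\mathrm T^*(G/P)$, identify it via Proposition~\ref{NG} with the closure of the conormal bundle to a $K$-stable subvariety, and apply Panyushev's Theorem~\ref{Pan} to transfer sphericality to $G/P$ and then to a Grassmannian quotient. The additional care you take with the $K$-equivariant reformulations of Proposition~\ref{NG} and Theorem~\ref{Pan} and with the dimension count is exactly the detail the paper leaves implicit.
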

\begin{proof} Let $\tilde V$ be an irreducible component of V$(M)$. By the discussion succeeding after Example~\ref{Pn} the variety $G\tilde V ($here $G\cong$SL$_n)$ is birationally isomorphic to T$^*$(SL$_n/P)$ for some parabolic subgroup $P\subset$SL$_n$, and the subvariety $\tilde V\subset G\tilde V$ is isomorphic to a conical Lagrangian subvariety $Y$ of T$^*(G/P)$. Any conical Lagrangian subvariety of T$^*(G/P)$ is the closure of the total space of the conormal bundle N$^*_{Z/(G/P)}$ to some smooth subvariety $Z\subset G/P$, see Proposition~\ref{NG} above. Therefore $\tilde V$ is birationally isomorphic to the total space of the conormal bundle to a smooth subvariety $Z\subset G/P$.

Obviously $Z$ is $\frak k$-stable. Then, by Theorem~\ref{MPan}, the variety $G/P$ has an open orbit of a Borel subalgebra of $\frak k$, i.e. is $\frak k$-spherical. This shows that $G/\tilde P$ is $\frak k$-spherical for any maximal parabolic subgroup $\tilde P$ with $\tilde P\supset P$. Since $G/\tilde P$ is a Grassmannian, the proof is complete.\end{proof}
\begin{theorem}\label{Gr_to_M}Assume that $G=\mathrm{SL}(V)$. Let $P\subset G$ be a parabolic subgroup  such that $G/P$ is $\frak k$-spherical. Then there exists a simple infinite-dimensional multiplicity-free $(\frak g, \frak k)$-module  $M$.\end{theorem}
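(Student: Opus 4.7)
The plan is to produce $M$ as global sections of a simple $K$-equivariant twisted $\mathcal{D}$-module on $G/P$ with prescribed characteristic variety, via Beilinson-Bernstein localization. The required $\frak{k}$-sphericity of the associated variety of $M$ will follow from Panyushev's Theorem~\ref{MPan}, while multiplicity-freeness will be read off from the Vinberg-Kimelfeld lemma and the fact that a simple $\mathcal{D}$-module has characteristic cycle of multiplicity one along its support.

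Since $G/P$ is $\frak{k}$-spherical, I would first pick a smooth $K$-stable locally closed subvariety $Z\subsetneq G/P$ (for instance, a closed $K$-orbit of positive codimension). Form $Y:=\overline{\mathrm{N}^*_{Z/(G/P)}}\subset\mathrm{T}^*(G/P)$, which is a closed conical irreducible Lagrangian subvariety by Example~\ref{TZX} and Proposition~\ref{NG}. Applying Theorem~\ref{MPan} to $Z\hookrightarrow G/P$ with the action of $K$ and the Borel $B_{\frak{k}}\subset K$, the generic $B_{\frak{k}}$-stabilizers on $G/P$ and on $Y$ coincide. Since $G/P$ is $\frak{k}$-spherical this stabilizer has dimension $\dim B_{\frak{k}}-\dim G/P$, and as $\dim Y=\dim G/P$ it follows that $B_{\frak{k}}$ has an open orbit on $Y$, so $Y$ is $\frak{k}$-spherical. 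Because $G=\mathrm{SL}_n$, the moment map $\phi_P$ is birational over its image (cf.\ the discussion following Example~\ref{Pn}), so $\tilde V:=\phi_P(Y)$ is an irreducible conical Lagrangian $\frak{k}$-spherical subvariety of $\overline{Gu}\subset\frak{g}^*$ of positive dimension.

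Next I would take a simple $K$-equivariant twisted $\mathcal{D}_\lambda$-module $\mathcal{M}$ on $G/P$ with characteristic variety $Y$ --- concretely, the intermediate extension $(j_Z)_{!*}(\mathcal{L}_\lambda|_Z)$ of the twisted local system on $Z$ for a sufficiently dominant regular $\lambda$. By Beilinson-Bernstein, $M:=\Gamma(G/P,\mathcal{M})$ is a simple $U(\frak{g})$-module with $\mathrm{V}(M)=\phi_P(Y)=\tilde V$, and $K$-equivariance of $\mathcal{M}$ makes $M$ a $(\frak{g},\frak{k})$-module. Proposition~\ref{Pav}~a) then yields that $M$ is bounded. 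For multiplicity-freeness, $\mathbb{F}[\tilde V]$ is a multiplicity-free $\frak{k}$-module by the Vinberg-Kimelfeld lemma (since $\tilde V$ is irreducible and $\frak{k}$-spherical); because the characteristic cycle of $\mathcal{M}$ has multiplicity one along $Y$, the associated graded $\overline{\mathrm{gr}}M$ is generically a rank-one torsion-free module over $\mathbb{F}[\tilde V]$ and so is itself multiplicity-free, hence so is $M$. Infinite-dimensionality is automatic from $\dim\tilde V>0$.

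The main obstacle is the $\mathcal{D}$-module construction: one must guarantee the existence of a simple $K$-equivariant twisted $\mathcal{D}_\lambda$-module with the prescribed characteristic variety $Y$ and with nonzero simple global sections, which requires coordinating the dominance/regularity needed for Beilinson-Bernstein with the $K$-equivariance of the twist $\mathcal{L}_\lambda$. The case when $K$ acts transitively on $G/P$ is particularly delicate, since no proper $K$-stable $Z\subsetneq G/P$ then exists; this must be treated by taking $\mathcal{M}$ to be a twisted line bundle itself with non-integral parameter $\lambda$, in the spirit of the Harish-Chandra principal-series construction.
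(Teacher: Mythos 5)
Your geometric skeleton is in fact the same as the paper's: the paper also reduces to a proper closed $K$-orbit $Z\subsetneq G/P$, uses Panyushev's Theorem~\ref{MPan} to transfer $\frak k$-sphericity from $G/P$ to the total space of $\mathrm N^*_{Z/(G/P)}$, and treats the transitive case (which occurs only for $\frak g\cong\frak{sl}_{2n}$, $\frak k\cong\frak{sp}_{2n}$) separately. The difference is what happens next: at exactly the point where you launch the Beilinson--Bernstein construction, the paper simply invokes Theorem~6.3 of~\cite{PS}, whose hypothesis is precisely the condition you have arranged ($K$ has a proper closed orbit on $G/P$ whose conormal bundle has $K$-spherical total space) and whose conclusion is precisely the existence of a simple infinite-dimensional multiplicity-free $(\frak g,\frak k)$-module. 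So the entire analytic content of the theorem is outsourced by the paper to~\cite{PS}, and it is exactly this content that your sketch labels ``the main obstacle'' and does not establish. What you have written is therefore the reduction the paper already performs, followed by a program rather than a proof.

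Beyond the acknowledged obstacle (existence of a $K$-equivariant twisted local system on $Z$ for a $\lambda$ that is simultaneously dominant/regular enough for localization on the \emph{partial} flag variety $G/P$ --- which for a general reductive, non-symmetric $\frak k$ is not off-the-shelf), two specific steps are wrong or unjustified as stated. First, the intermediate extension $(j_Z)_{!*}(\mathcal L_\lambda|_Z)$ need not have characteristic variety equal to $\overline{\mathrm N^*_{Z/(G/P)}}$: characteristic cycles of simple holonomic modules can acquire extra components along conormal varieties of boundary strata (the well-known failure of irreducibility of characteristic varieties, cf.\ Kashiwara--Saito). This does not hurt boundedness --- every component is still the conormal variety of a $K$-stable subvariety, hence spherical by Theorem~\ref{MPan} --- but it breaks your multiplicity-freeness argument, which needs $\mathrm V(M)$ irreducible and the associated cycle of multiplicity one along it. Second, even granting generic rank one, the inference ``$\overline{\mathrm{gr}}M$ is generically rank one over $\mathbb F[\tilde V]$, hence multiplicity-free'' needs an argument: one must embed $\overline{\mathrm{gr}}M$ into a localization of $\mathbb F[\tilde V]$ at a $\frak k$-semiinvariant element before the Vinberg--Kimelfeld lemma controls the $\frak k$-multiplicities, and you do not supply this. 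Either carry out these steps or do what the paper does and cite~\cite{PS} for the construction.
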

\begin{proof}Theorem 6.3 in ~\cite{PS} proves the existence of a simple infinite-dimensional multiplicity-free $(\frak g, \frak k)$-module under the assumption that there exists a parabolic subgroup $P\subset G$ for which $K$ has a proper closed orbit on $G/P$ such that the total space of its conormal bundle is $K$-spherical. By~Theorem~\ref{Pan} this latter condition is equivalent to the $K$-sphericity of $G/P$. It remains to consider the case when $G/P$ has no proper closed $K$-orbits on $G/P$, i.e. $K$ has only one orbit on $G/P$. This happens only if $\frak g\cong\frak{sl}_{2n}$, $\frak k\cong\frak{sp}_{2n}$~\cite{On}. However, in this last case the existence of a simple infinite-dimensional multiplicity-free $(\frak g, \frak k)$-module is well known, see for instance~\cite{PS}.\end{proof}
We have thus proved the following weaker version of Theorem~\ref{T1}.
\begin{corollary}\label{Gr1}Assume that $\frak g=\frak{sl}_n$. A pair $(\frak{sl}_n, \frak k)$ admits an infinite-dimensional bounded simple $(\frak{sl}_n, \frak k)$-module if and only if $\mathrm{Gr}(r, \mathbb F^n)$ is a spherical $\frak k$-variety for some $r$.\end{corollary}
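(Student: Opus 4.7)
The plan is to combine Theorem~\ref{M_to_Gr} and Theorem~\ref{Gr_to_M}, exploiting the fact that the Grassmannians $\mathrm{Gr}(r,\mathbb F^n)$ are precisely the quotients $\mathrm{SL}_n/\tilde P$ for $\tilde P$ a maximal parabolic subgroup of $\mathrm{SL}_n$.

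For the ``only if'' direction, suppose a simple infinite-dimensional bounded $(\frak{sl}_n,\frak k)$-module exists. Then Theorem~\ref{M_to_Gr} directly furnishes an integer $r$ such that $\mathrm{Gr}(r,\mathbb F^n)$ is $\frak k$-spherical; no further argument is needed.

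For the ``if'' direction, suppose $\mathrm{Gr}(r,\mathbb F^n)$ is $\frak k$-spherical for some $r$. I would identify this Grassmannian with $G/\tilde P$, where $G=\mathrm{SL}_n$ and $\tilde P$ is the (maximal parabolic) stabilizer of a fixed $r$-dimensional subspace of $\mathbb F^n$. Applying Theorem~\ref{Gr_to_M} with $P:=\tilde P$ yields a simple infinite-dimensional multiplicity-free $(\frak{sl}_n,\frak k)$-module $M$. Since a multiplicity-free module is by definition bounded with constant $C_M=1$, this $M$ witnesses the desired implication.

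There is essentially no substantive obstacle here: the corollary is a clean repackaging of Theorems~\ref{M_to_Gr} and~\ref{Gr_to_M}, and the only conceptual step is the standard identification of Grassmannians with $\mathrm{SL}_n$-quotients by maximal parabolics, together with the trivial observation that multiplicity-free modules are bounded.
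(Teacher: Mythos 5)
Your proposal is correct and matches the paper's proof, which likewise deduces the corollary directly from Theorems~\ref{M_to_Gr} and~\ref{Gr_to_M}; the extra details you supply (identifying $\mathrm{Gr}(r,\mathbb F^n)$ with $G/\tilde P$ and noting that multiplicity-free implies bounded) are exactly the implicit steps the paper leaves to the reader.
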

\begin{proof} The statement  follows directly from Theorems~\ref{Gr_to_M} and ~\ref{M_to_Gr}.\end{proof}
\begin{corollary}[see also ~\cite{PS}, Conjecture 6.6]Assume that $\frak g=\frak{sl}_n$. If there exists a bounded simple infinite-dimensional $(\frak{sl}_n, \frak k)$-module, then there exists a multiplicity-free simple infinite-dimensional $(\frak{sl}_n, \frak k)$-module.\end{corollary}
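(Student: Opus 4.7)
The plan is to deduce the statement essentially immediately by chaining the two directions that were proved independently in Theorems~\ref{M_to_Gr} and~\ref{Gr_to_M}. Observe that Theorem~\ref{M_to_Gr} only uses the hypothesis that a simple infinite-dimensional bounded module exists, while Theorem~\ref{Gr_to_M} produces a multiplicity-free one from the $\frak k$-sphericity of a suitable $G/P$. The Grassmannian $\mathrm{Gr}(r,\mathbb F^n)$ is precisely $G/P$ for a maximal parabolic $P\subset\mathrm{SL}_n$, so the conclusions of the two theorems match.

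More concretely, I would first let $M$ be a bounded simple infinite-dimensional $(\frak{sl}_n,\frak k)$-module. Theorem~\ref{M_to_Gr} applies to $M$ and yields an integer $r$ such that $\mathrm{Gr}(r,\mathbb F^n)$ is a $\frak k$-spherical variety. Next I would take $P\subset\mathrm{SL}_n$ to be the stabilizer of a point in $\mathrm{Gr}(r,\mathbb F^n)$, so that $G/P\cong\mathrm{Gr}(r,\mathbb F^n)$ is $\frak k$-spherical. Theorem~\ref{Gr_to_M} then produces a simple infinite-dimensional multiplicity-free $(\frak{sl}_n,\frak k)$-module, which is exactly the desired conclusion.

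There is no real obstacle here beyond checking that the two theorems connect via the same intermediate geometric condition (sphericity of a Grassmannian). The only mild point to keep in mind is that Theorem~\ref{Gr_to_M} was proved by splitting into two cases according to whether $K$ has a proper closed orbit on $G/P$ or acts transitively; both cases yield a multiplicity-free simple module, so no additional work is required. Thus the corollary follows by a two-line application of Corollary~\ref{Gr1} together with the multiplicity-free part of Theorem~\ref{Gr_to_M}.
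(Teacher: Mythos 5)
Your proposal is correct and follows essentially the same route as the paper: the paper derives the corollary directly from Corollary~\ref{Gr1}, which is itself just the chaining of Theorems~\ref{M_to_Gr} and~\ref{Gr_to_M} through the sphericity of a Grassmannian, exactly as you spell out. The observation that Theorem~\ref{Gr_to_M} delivers a \emph{multiplicity-free} module (not merely a bounded one) is precisely the point that makes the corollary immediate, and you have identified it correctly.
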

\begin{proof}The statement follows directly from Corollary~\ref{Gr1}.\end{proof}
In order to prove Theorem~\ref{T1} it remains to show that $r$ in Corollary~\ref{Gr1} can be chosen to equal 1. For this we need the following result.
\begin{theorem}[I.~Losev ~\cite{Lo}]\label{LI}Suppose $X$ is a strongly equidefectinal~\cite[Def. 1.2.5]{Lo} normal affine irreducible Hamiltonian $G$-variety. Then $\mathbb F(X)^G=\mathrm{Quot}(\mathbb F[X]^G)$, where $\mathrm{Quot}(\mathbb F[X]^G)$ is the field of fractions of $\mathbb F[X]^G$.\end{theorem}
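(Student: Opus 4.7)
The plan is to show that the categorical quotient map $\pi:X\to Y:=\mathrm{Spec}\,\mathbb F[X]^G$ is a geometric quotient on a dense open set; this is equivalent to the claimed equality of fields. Since $X$ is normal, affine and irreducible and $G$ is reductive, $\mathbb F[X]^G$ is a finitely generated normal domain and $\mathrm{Quot}(\mathbb F[X]^G)=\mathbb F(Y)$. The inclusion $\mathbb F(Y)\subseteq\mathbb F(X)^G$ is automatic, and by a standard Rosenlicht-type argument the reverse inclusion holds if and only if a generic fiber of $\pi$ consists of a single $G$-orbit. So the whole proof reduces to controlling the generic fibers of $\pi$.

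Next I would bring in the Hamiltonian structure. The $G$-equivariant moment map $\mu:X\to\frak g^*$ endows $\mathbb F[X]^G$ with a Poisson bracket, turning $Y$ into a Poisson variety, and the corank of this Poisson structure at a generic point of $Y$ equals the dimension of a generic symplectic leaf of the reduction. A Vinberg--Knop-type dimension identity then relates $\dim X$, the dimension of a generic $G$-orbit, $\dim Y$, and the defect $\mathrm{def}_G(X)$ of the Hamiltonian action. The strong equidefectivity hypothesis forces this defect to remain constant along the natural $G$-stratification of $X$ and to behave well under restriction to $G$-invariant subvarieties, so the identity restricts in a controlled way and predicts the correct dimension for the generic fiber of $\pi$.

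The crux --- and the main obstacle --- is to upgrade this dimension count to an actual orbit identification. For this I would apply the Marle--Guillemin--Sternberg symplectic slice theorem at a generic point $x\in X$: a $G$-saturated neighborhood of $Gx$ is equivariantly modeled on a twisted product built from the stabilizer $G_x$ and its symplectic slice representation. The step that has to be checked carefully is that strong equidefectivity is inherited by the slice, so the Hamiltonian $G_x$-variety to which one passes again satisfies the hypotheses of the theorem. Granted this, an induction on $\dim X$ reduces the claim to the base case of a linear symplectic action of a reductive group, where $\mathbb F(V)^{G_x}=\mathrm{Quot}(\mathbb F[V]^{G_x})$ follows from classical invariant theory. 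Reassembling the slice decomposition then identifies a generic fiber of $\pi$ as a single $G$-orbit, yielding $\mathbb F(X)^G=\mathrm{Quot}(\mathbb F[X]^G)$.
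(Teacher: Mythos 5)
The paper offers no proof of this statement to compare yours against: it is quoted directly from Losev's article \cite{Lo}, where it is one of the main theorems and its proof occupies a large part of that paper. Judged on its own, your outline starts correctly --- reduce the equality of fields to a statement about generic fibres of $\pi\colon X\to\mathrm{Spec}\,\mathbb F[X]^G$ and control these via the defect of the Hamiltonian action --- but note that the correct reduction is that the generic fibre must contain a \emph{dense} $G$-orbit, not that it be a single orbit (already for $\mathbb G_m$ scaling $\mathbb A^1$ the fields agree while the fibre is two orbits).

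The genuine gaps are in the second half. The Marle--Guillemin--Sternberg slice theorem is an analytic statement and is simply not available in the algebraic category over $\mathbb F$; the only substitute is Luna's \'etale slice theorem, which applies only at points with \emph{closed} $G$-orbit (a generic point of $X$ need not have one) and yields an \'etale-local, not Zariski-local or $G$-saturated, model. Converting such \'etale-local product decompositions into a statement about the actual fibres of the categorical quotient is precisely where the substance of Losev's theorem lies, and your ``reassembling'' step does not address it. Moreover your base case is not supplied by classical invariant theory: the equality $\mathbb F(V)^{H}=\mathrm{Quot}(\mathbb F[V]^{H})$ fails for general linear actions of reductive groups (e.g.\ the scalar $\mathbb G_m$-action on $\mathbb F^2$), so for the slice representation it is again an instance of the theorem one is trying to prove and still needs the equidefectivity hypothesis --- whose inheritance by the slice you flag but do not establish. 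Losev's actual argument runs through quite different machinery (central valuations and the Weyl group of a Hamiltonian action), so your sketch should be regarded as a plausible plan rather than a proof.
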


We say that a $K$-symplectic variety $(X, \omega)$ is {\it $K$-coisotropic} if the generic $K$-orbit on $X$ is $K$-coisotropic in $X$. We are now ready to prove the following.
\begin{theorem}Let $V$ be a $K$-module. Suppose $\mathrm{Gr}(r, V)$ is a $K$-spherical variety for some $r<\mathrm{dim}V$. Then $\mathbb P(V)$ is a $K$-spherical variety.\end{theorem}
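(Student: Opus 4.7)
The idea is to convert the $K$-sphericity of $\mathrm{Gr}(r,V)$ into Poisson-commutativity of the algebra of $K$-invariants on the nilpotent orbit closure $\overline{\mathcal O}_r\subset\mathfrak{sl}_n^*$ that birationally resolves to $\mathrm T^*\mathrm{Gr}(r,V)$; transport this along the closed inclusion $\overline{\mathcal O}_1\subset\overline{\mathcal O}_r$ provided by Example~\ref{Pn}; and then read off $K$-sphericity of $\mathbb P(V)$ by the same correspondence run in reverse. Losev's Theorem~\ref{LI} is the tool that lets us work with regular, as opposed to rational, $K$-invariants, so that restriction to a closed $K$-subvariety actually produces something.

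In more detail: the standard symplectic dictionary gives that a smooth $K$-variety $X$ is $K$-spherical iff the $K$-action on $\mathrm T^*X$ is coisotropic iff $\mathbb F(\mathrm T^*X)^K$ is Poisson-commutative (the last equivalence depends on having enough $K$-invariants, which Theorem~\ref{LI} supplies once we know the variety is strongly equidefectinal). Applying this to $X=\mathrm{Gr}(r,V)$, together with Theorem~\ref{MMapSl} and the discussion after Example~\ref{Pn} (which identifies $\mathrm T^*\mathrm{Gr}(r,V)$ birationally and $G$-equivariantly with the Richardson closure $\overline{\mathcal O}_r$ --- nonzero because $r<\dim V$), yields that $\mathbb F[\overline{\mathcal O}_r]^K$ is Poisson-commutative.

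Example~\ref{Pn} then places $\mathcal O_1\subset\overline{\mathcal O}_r$. Since $\overline{\mathcal O}_1$ is a union of coadjoint orbits (symplectic leaves of $\mathfrak{sl}_n^*$), its defining ideal in $\mathbb F[\overline{\mathcal O}_r]$ is a Poisson ideal, so the restriction $\mathbb F[\overline{\mathcal O}_r]\twoheadrightarrow\mathbb F[\overline{\mathcal O}_1]$ is a Poisson surjection. Taking $K$-invariants (reductivity of $K$ preserves surjectivity) gives that $\mathbb F[\overline{\mathcal O}_1]^K$ is Poisson-commutative. Reversing the dictionary on the open symplectic leaf $\mathcal O_1$ and pulling back along the birational moment map $\mathrm T^*\mathbb P(V)\to\overline{\mathcal O}_1$, the $K$-action on $\mathrm T^*\mathbb P(V)$ is coisotropic, hence $\mathbb P(V)$ is $K$-spherical.

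The main obstacle I foresee is verifying Losev's strong equidefectinality hypothesis~\cite[Def.~1.2.5]{Lo} for $\overline{\mathcal O}_r$ and (if needed) $\overline{\mathcal O}_1$; the expectation is that coisotropicity of the $K$-action together with normality of nilpotent orbit closures in $\mathfrak{sl}_n$ (Kraft--Procesi) suffices, but Losev's precise definition must be unpacked. A secondary subtlety is that $\overline{\mathcal O}_r$ is singular, so the sphericity--coisotropicity correspondence is applied on the generic (smooth, symplectic) stratum $\mathcal O_r$ rather than on the full variety; likewise for $\mathcal O_1\subset\overline{\mathcal O}_1$.
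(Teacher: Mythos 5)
Your proposal follows essentially the same route as the paper: sphericity of $\mathrm{Gr}(r,V)$ is converted into Poisson-commutativity of $\mathbb F[\overline{\EuScript O}]^K$ via Vinberg's coisotropy criterion together with Losev's theorem, transported to the minimal nilpotent orbit closure by restriction along the closed Poisson embedding $\overline{\EuScript O}_{min}\subset\overline{\EuScript O}$, and then read back as sphericity of $\mathbb P(V)$. The one obstacle you flag---verifying strong equidefectinality---is handled in the paper by a direct citation of Losev's Corollary 3.4.1, which asserts it for the closure of any $\mathrm{SL}(V)$-orbit in $\frak{sl}(V)^*$, so no unpacking of the definition is needed.
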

\begin{proof} The variety Gr$(r, V)$ is $K$-spherical if and only if the variety T$^*$Gr$(r, V)$ is $K$-coisotropic~\cite[Ch.~II, Cor.~1]{Vi}. As T$^*$Gr$(r, V)$ is $K$-birationally isomorphic to some nilpotent orbit $\EuScript O\subset\frak{sl}(V)^*$, this orbit $\EuScript O\subset\frak{sl}(V)^*$ is $K$-coisotropic. Therefore $\mathbb F(\overline{\EuScript O})^K$ is a Poisson-commutative subfield of $\mathbb F(\overline{\EuScript O})$~\cite[Ch.~II, Prop.~5]{Vi}. Since the closure of any SL$(V)$-orbit in $\frak{sl}(V)^*$ is a strongly equidefectinal normal affine irreducible Hamiltonian $K$-variety~\cite[Cor. 3.4.1]{Lo}, $\mathbb F[\overline{\EuScript O}]^K$ is Poisson-commutative.

Let $\EuScript O_{min}$ be the nonzero nilpotent orbit in $\frak{sl}(V)$ of minimal dimension. It is well known that $\EuScript O_{min}\subset\overline{\EuScript O}$. Hence $\mathbb F[\overline{\EuScript O_{min}}]^K$ is a quotient of $\mathbb F[\overline{\EuScript O}]^K$ and $\mathbb F[\overline{\EuScript O_{min}}]^K$ is Poisson-commutative. As $\EuScript O_{min}$ is $K$-birationally isomorphic to T$^*\mathbb P(V)$ (see  Example~\ref{Pn} above), the variety $\mathbb P(V)$ is $K$-spherical.\end{proof}
\section{Appendix. Results of C.~Benson, G.~Ratcliff, A.~Leahy, V.~Kac}
The classification of the spherical modules has been worked out in several steps. V.~Kac has classified the simple spherical modules in~\cite{KC}, C.~Benson and G.~Ratcliff have classified all spherical modules in~\cite{BR}. The classification is contained also in paper~\cite{Le} of A.~Leahy. Below we reproduce their list.

Let $W$ be a $K$-module. Then $W$ is a spherical $\frak k$-variety if and only if the pair $([\frak k, \frak k], W)$ is a direct sum of pairs $(\frak k_i, W_i)$ listed below and in addition $\frak k+\oplus_i c_i=$N$_{\frak{gl}(W)}(\frak k+\oplus_i c_i)$ for certain abelian Lie algebras $c_i$ attached to $(\frak k_i, W_i)$. Here N$_{\frak{gl}(W)}\frak k$ stands for the normalizer of $\frak k$ in $\frak{gl}(W)$ and $c_i$ is a 0-, 1- or 2-dimensional Lie algebra listed in square brackets after the pair $(\frak k_i, W_i)$. This subalgebra is generated by linear operators $h_1$ and $h_{m, n}$. By definition, $h_1=$id. The notation $h_{m, n}$ is used only when $W=W_1\oplus W_2$: in this case $h_{m, n}|_{W_1}=m\cdot$id and $h_{m, n}|_{W_2}=n\cdot$id. The notation "$(\frak k_i, \{W_i, W_i'\})$" is shorthand for "$(\frak k_i, W_i)$" and "$(\frak k_i, W_i')$". Finally, $\omega_i$ stands for the $i$-th fundamental weight and the corresponding fundamental representation. We follow the enumeration convention for fundamental weights of~\cite{OV}.{\tiny
\begin{center}{\bf Table 6.1}: Indecomposable spherical representations.\end{center}
0) $(0, \mathbb F) [0]$.\\
\setcounter{AP}{1}\roman{AP}) Irreducible representations of simple Lie algebras:\\
$\begin{tabular}{lll}$1) (\frak{sl}_n, \{\omega_1, \omega_{n-1}\}) [\mathbb Fh_1] (n\ge 2)$;&2) $(\frak{so}_n, \omega_1) [0] (n\ge 3)$;&3) $(\frak{sp}_{2n}, \omega_1) [\mathbb Fh_1] (n\ge 2)$;\\
4) $(\frak{sl}_n, \{2\omega_1, 2\omega_{n-1}\}) [0] (n\ge 3)$;&5) $(\frak{sl}_{2n+1}, \{\omega_2, \omega_{n-2}\}) [\mathbb Fh_1] (n\ge 2)$;&6) $(\frak{sl}_{2n}, \{\omega_2, \omega_{n-2}\}) [0] (n\ge 3)$;\\7) $(\frak{so}_7, \omega_3) [0]$;&8) $(\frak{so}_8, \{\omega_3, \omega_4\}) [0]$;&9) $(\frak{so}_9, \omega_4) [0]$;\\10) $(\frak{so}_{10}, \{\omega_4, \omega_5\}) [\mathbb Fh_1]$;&11) $($E$_6, \omega_1) [0]$;&12) $($G$_2, \omega_1 ) [0]$.\end{tabular}$\\
\setcounter{AP}{2}\roman{AP}) Irreducible representations of nonsimple Lie algebras:\\
$\begin{tabular}{ll}1) $(\frak {sl}_n\oplus\frak{sl}_m (m>n\ge 2), \{\omega_1, \omega_{n-1}\}\otimes\{\omega_1, \omega_{m-1}\}) [\mathbb Fh_1]$;&2) $(\frak {sl}_n\oplus\frak{sl}_n (n\ge 2), \{\omega_1, \omega_{n-1}\}\otimes\{\omega_1, \omega_{n-1}\}) [0]$;\\3) $(\frak{sl}_2\oplus\frak{sp}_{2n} (n\ge 2), \omega_1\otimes\omega_1) [0]$;&4)
$(\frak{sl}_3\oplus\frak{sp}_{2n} (n\ge 2), \{\omega_1,\omega_2\}\otimes\omega_1) [0]$;\\5) $(\frak{sl}_n\oplus\frak{sp}_4 (n\ge 5), \{\omega_1, \omega_{n-1}\}\otimes\omega_{1}) [\mathbb Fh_1]$;&6) $(\frak{sl}_4\oplus\frak{sp}_4, \{\omega_1, \omega_3\}\otimes\omega_{1}) [0]$.\end{tabular}$\\
\setcounter{AP}{3}\roman{AP}) Reducible representations of Lie algebras:\\
1) $(\frak{sl}_n\oplus\frak{sl}_m\oplus\frak{sl}_2 (n, m\ge 3); (\{\omega_1, \omega_{n-1}\}\oplus\{\omega_1, \omega_{n-1}\})\otimes\omega_1) [\mathbb Fh_{1,0}\oplus\mathbb Fh_{0,1}]$;\\
2) $(\frak{sl}_n (n\ge 3); \{\omega_1\oplus\omega_1, \omega_{n-1}\oplus\omega_{n-1}\}) [\mathbb Fh_{1,1}]$;\\
3) $(\frak{sl}_n (n\ge 3); \omega_1\oplus\omega_{n-1}) [\mathbb Fh_{1,-1}]$;\\
4) $(\frak{sl}_{2n} (n\ge 2); \{\omega_1, \omega_{n-1}\}\oplus\{\omega_2, \omega_{n-2}\}) [\mathbb Fh_{0,1}]$;\\
5) $(\frak{sl}_{2n+1} (n\ge 2); \omega_1\oplus\omega_2) [\mathbb Fh_{1,-m}]$;\\
6) $(\frak{sl}_{2n+1} (n\ge 2); \omega_{n-1}\oplus\omega_2) [\mathbb Fh_{1, m}]$;\\
7) $(\frak{sl}_n\oplus\frak{sl}_m (2\le n<m); \{\omega_1, \omega_{n-1}\}\otimes(\mathbb F\oplus\{\omega_1, \omega_{m-1}\})) [\mathbb Fh_{1,0}]$;\\
8) $(\frak{sl}_n\oplus\frak{sl}_m (m\ge 2, n\ge m+2); \{\omega_1, \omega_{n-1}\}\otimes(\mathbb F\oplus\{\omega_1, \omega_{m-1}\})) [\mathbb Fh_{1, 1}]$;\\
9) $(\frak{sl}_n\oplus\frak{sl}_m (2\le n<m); \{\omega_1, \omega_{n-1}\}\oplus\{\omega_1^* (=\omega_{n-1}), \omega_{n-1}^* (=\omega_1)\}\otimes\{\omega_1, \omega_{m-1}\}) [\mathbb Fh_{1,0}]$;\\
10) $(\frak{sl}_n\oplus\frak{sl}_m (m\ge 2, n\ge m+2); \{\omega_1, \omega_{n-1}\}\oplus\{\omega_1^* (=\omega_{n-1}), \omega_{n-1}^* (=\omega_1)\}\otimes\{\omega_1, \omega_{m-1}\}) [\mathbb Fh_{1,-1}]$;\\
11) $(\frak{sl}_n\oplus\frak{sp}_{2m}\oplus\frak{sl}_2 (n\ge 3, m\ge 1); (\{\omega_1, \omega_{n-1}\}\oplus\omega_1)\otimes\omega_1) [\mathbb Fh_{0,1}]$;\\
12) $(\frak{sl}_2; \{\omega_1\oplus\omega_1\}) [0]$;\\
13) $(\frak{sl}_n\oplus\frak{sl}_n (n\ge 2); \{\omega_1, \omega_{n-1}\}\oplus\{\omega_1^{(*)}, \omega_{n-1}^{(*)}\}\otimes\{\omega_1, \omega_{n-1}\}) [0]$;\\
14) $(\frak{sl}_{n+1}\oplus\frak{sl}_n (n\ge 2); \{\omega_1, \omega_n\}\oplus\{\omega_1^{(*)}, \omega_n^{(*)}\}\otimes\{\omega_1, \omega_{n-1}\}) [0]$;\\
15) $(\frak{sl}_2\oplus\frak{sp}_{2n} (n\ge 2); \omega_1\otimes(\mathbb F\oplus\omega_1)) [0]$;\\
16) $(\frak{sp}_{2n}\oplus\frak{sp}_{2m}\oplus\frak{sl}_2; (\omega_1\oplus \omega_1)\otimes\omega_1) [0]$;\\
17) $(\frak{sl}_2\oplus\frak{sl}_2\oplus\frak{sl}_2, (\omega_1\oplus\omega_1)\otimes\omega_1) [0]$;\\
18) $(\frak{so}_8, \{\omega_1\oplus\omega_3, \omega_1\oplus\omega_4, \omega_3\oplus\omega_4\}) [0]$.}
\section{Acknowledgements}
I thank my scientific advisor Ivan Penkov for his attention to my work and the great help with the text-editing. I thank also Dmitri Panyushev for his thoughtful comments on the paper and Vladimir Zhgoon for fruitful discussions on invariant theory. Finally, I gratefully acknowledge extensive and very helpful comments of two referees.


\begin{thebibliography}{99}
\bibitem{BR} Chal Benson, Gail Ratcliff, {\it A Classification of Multiplicity Free Actions}, J. of Alg. {\bf 181}(1996), 152--186.
\bibitem{NG} Neil Chriss, Victor Ginzburg, {\it Representation theory and complex geometry}, Birkh\"auser Boston, Boston, 1997.
\bibitem{Dix} Jacques Dixmier, {\it Alg\'ebres Enveloppantes}, Gauthier-Villars, Paris, 1974.
\bibitem{F} Suren Fernando, {\it Lie algebra modules with finite dimensional weight spaces. I}, Trans. Amer. Math. Soc. {\bf 322}(1990), 757--781.
\bibitem{Gab} Ofer Gabber, {\it The integrability of the characteristic variety},  Amer. J. Math.  {\bf 103}(1981), 445--468.
\bibitem{KC} Victor Kac, {\it Some remarks on nilpotent orbits}, J. of Alg. {\bf 64}(1980), 190--213.
\bibitem{KC2} Victor Kac, {\it Constructing groups associated to infinite dimensional Lie algebras, Infinite dimensional groups with applications (Berkeley, Calif., 1984)}, in: MSRI Publ. 4, Springer-Verlag, New York, 1985.
\bibitem{KV} Anthony Knapp, David Vogan, {\it Cohomological induction and unitary representations}, Princeton Mathematical Series {\bf45},  Princeton University Press, Princeton, 1995.
%\bibitem{KL} Guenter R. Krause, Thomas H. Lenagan, Growth of algebras and Gelfand-Kirillov dimension, Pitman, Boston-London-Melbourne, 1985.
\bibitem{Le} Andrew Leahy, {\it A classification of multiplicity free representations}, J. of Lie Theory {\bf 8}(1998), 376--391.
\bibitem{Lo} Ivan Losev, {\it Algebraic Hamiltonian actions},  Math. Z. {\bf 263} (2009),  685--723.
\bibitem{M} Olivier Mathieu, {\it Classification of irreducible weight modules}, Ann. de l'Inst. Fourier {\bf 50}(2000), 537--592.
\bibitem{On}  Arkadiy Onishchik, {\it Topology of transitive transformation groups}, Johann Ambrosius Barth Verlag, Leipzig, 1994.
\bibitem{Pan} Dmitri Panyushev, {\it On the conormal bundles of a G-stable subvariety}, Manuscripta Math. {\bf 99}(1999), 185--202.
\bibitem{PS} Ivan Penkov, Vera Serganova, {\it On bounded generalized Harish-Chandra modules}, to appear in Ann. de l'Inst. Fourier, arxiv: 0710.0906.
\bibitem{PZ} Ivan Penkov, Gregg Zuckerman, {\it Generalized Harish-Chandra modules with generic minimal $\frak k$-type}, Asian J. Math. {\bf 8}(2004), 795--812.
\bibitem{PSZ} Ivan Penkov, Vera Serganova, Gregg Zuckerman, {\it On the existence of ($\frak g$, $\frak k$)-modules of finite type}, Duke Math. J. {\bf 125}(2004), 329--349.
\bibitem{Ri} Roger Richardson, {\it Conjugacy classes in parabolic subgroups of semisimple algebraic groups}, Bull. London Math. Soc {\bf 6}(1974), 21--24.
\bibitem{Vi} \'Ernest Vinberg, {\it Commutative homogeneous spaces and co-isotropic symplectic actions}, Russian Math. Surveys {\bf 56}(2001), 1--60.
\bibitem{VK} \'Ernest Vinberg,  Boris Kimelfeld, {\it Homogeneous domains on flag manifolds and spherical subsets of semisimple Lie groups (Russian)}, Funktsional. Anal. i Prilozhen.  {\bf12} no. 3 (1978), 12--19.
\bibitem{OV} \'Ernest Vinberg, Arkadiy Onishchik, {\it Lie groups and algebraic groups}, Springer series in Soviet mathematics, Springer-Verlag, Berlin, 1990.
\end{thebibliography}
\end{document}